\documentclass{amsart}
\usepackage{amsmath}
\usepackage{amssymb}
\usepackage{amsthm}

\newtheorem{theorem}{Theorem}
\newtheorem{prop}{Proposition}
\newtheorem{lemma}{Lemma}
\newtheorem{rem}{Remark}

\newtheorem{cor}{Corollary}

\begin{document}
\title[Commutativity preserving transformations of conjugacy classes]
{Commutativity preserving transformations on conjugacy classes of compact self-adjoint operators}
\author{Mark Pankov}
\subjclass[2000]{47N50, 81P10, 81R15}

\keywords{compact self-adjoint operators,
commutativity preserving transformations, non-linear preserver}
\address{Faculty of Mathematics and Computer Science, 
University of Warmia and Mazury, S{\l}oneczna 54, Olsztyn, Poland}
\email{pankov@matman.uwm.edu.pl}

\maketitle

\begin{abstract}
Let $H$ be a complex Hilbert space of dimension not less than $3$
and let ${\mathcal C}$ be a conjugacy class of compact self-adjoint operators on  $H$.
Suppose that the dimension of the kernels of operators from ${\mathcal C}$
not less than the dimension of their ranges.
In the case when ${\mathcal C}$ is formed by operators of finite rank $k$
and $\dim H=2k$, we require that $k\ge 4$.
We show that every bijective transformation of ${\mathcal C}$ preserving the commutativity in both directions
is induced by a unitary or anti-unitary operator up to a permutation of eigenspaces of the same dimension. 
\end{abstract}

\section{Introduction}
In quantum mechanics, observables are identified with (not necessarily bounded) self-adjoint 
operators on a complex Hilbert space and two bounded observables are simultaneously observable 
if and only if the corresponding bounded self-adjoint operators commute \cite[Theorem 4.11]{Var}.
Consider the real vector space $B_{s}(H)$ formed by all bounded self-adjoint operators on a complex Hilbert space $H$.
Linear commutativity preserving automorphisms of $B_{s}(H)$ are determined in \cite{CJR}.
A description of non-linear bijective transformations of $B_{s}(H)$ preserving the commutativity 
in both directions can be found in \cite{MS} under the assumption that $H$ is separable. See \cite{Semrl2} for more results.

We investigate this kind of transformations on conjugacy classes of compact self-adjoint operators,
i.e. subsets of type $\{UAU^{*}\}_{U\in U(H)}$ with some fixed compact self-adjoint operator $A$.
Every such conjugacy class is completely determined by the spectrum  and the dimensions of the corresponding eigenspaces. 

The Grassmannian formed by $k$-dimensional subspaces of $H$
can be identified with the conjugacy class ${\mathcal P}_{k}(H)$ formed by projections (self-adjoint idempotents) of rank $k$.
Projections play an important role in operator theory and mathematical foundations of quantum mechanics. 
In particular, rank-one projections correspond to pure states of quantum mechanical systems. 
The classic version of Wigner's theorem states that every bijective transformation of  ${\mathcal P}_{1}(H)$
preserving the transition probability (the angle between the ranges of projections) is induced by a unitary or anti-unitary operator \cite{Wigner}.  
Moln\'ar \cite{Molnar} extended this result to conjugacy classes ${\mathcal P}_k(H)$ with $k>1$.
Various examples of Wigner-type theorems can be found in \cite{G-rev,Molnar-book,Pankov-book}.

By Uhlhorn's version of Wigner's theorem \cite{Uhlhorn},
every bijective transformation of ${\mathcal P}_{1}(H)$ preserving the orthogonality relation in both directions 
is induced by a unitary or anti-unitary operator if $\dim H\ge 3$. 
Gy\"ory \cite{Gyory} and \v{S}emrl \cite{Semrl} (see also \cite{GS})
proved independently that the same holds for other ${\mathcal P}_{k}(H)$ if $\dim H>2k$.
Simple examples show that the statement fails in the case when $\dim H=2k$.

Note that two distinct rank-one projections commute if and only if they are orthogonal.
For $k>1$ bijective transformations of ${\mathcal P}_{k}(H)$ preserving the commutativity in both directions
are described in \cite{Pankov1} under the assumption that $k\ne 2,4$ if $\dim H=6$ and $k\ge 4$ if $\dim H=2k$.
The same result is obtained in \cite{Pankov2} for conjugacy classes 
formed by self-adjoint rank-$k$ operators on $H$ such that $\dim H\ge 4k$.

In the present paper, we consider a conjugacy class ${\mathcal C}$ formed by compact self-adjoint operators on $H$.
We assume that the dimension of the kernels of operators from ${\mathcal C}$ is not less than the dimension of their ranges.
Then we have the following orthogonality relation on ${\mathcal C}$: 
operators $A,B\in {\mathcal C}$ are orthogonal if $AB=BA=0$, i.e. the ranges of $A$ and $B$ are orthogonal.
We show that every bijective transformation of ${\mathcal C}$ preserving the commutativity in both directions 
is orthogonality preserving in both directions (except in the case when $\dim H=4$ and ${\mathcal C}$ is formed by rank-$2$ operators
with two $1$-dimensional eigenspaces corresponding to non-zero eigenvalues). 
This fact together with Gy\"ory--\v{S}emrl's result immediately implies that every bijective transformation of ${\mathcal P}_{k}(H)$ 
is induced by a unitary or anti-unitary operator if $\dim H\ne 2k$. 

Our main result is the following: every bijective transformation $f$ of ${\mathcal C}$ preserving the commutativity in both directions
is induced by a unitary or anti-unitary operator up to a permutation of eigenspaces of the same dimension. 
We prove this statement under the assumption that $k\ge 4$ if  ${\mathcal C}$ is formed by rank-$k$ operators and $\dim H=2k$. 

The closures of the ranges of operators from ${\mathcal C}$ form 
the Grassmannian of $k$-dimensional subspaces of $H$ if the operator rank is $k$
or the Grassmannian of separable subspaces with infinite dimension and codimension if  the operator rank is infinite.  
The sets of all operators from ${\mathcal C}$ whose closures of the ranges are coincident can be characterised in terms of orthogonality. 
Since our transformation $f$ is orthogonality preserving in both directions, it induces a bijective transformation of the corresponding Grassmannian.
In particular, if the operator rank is $k$, then $f$ induces a bijective transformation of ${\mathcal P}_k(H)$ preserving the commutativity in both directions.
In the case when $\dim H=2k$, such transformations are described only for $k\ge 4$ and we need the same assumption in our main result.

Our crucial tool is the family of orthogonal apartments, 
i.e. subsets of ${\mathcal C}$ maximal with respect to the property that operators from them are mutually commuting. 
This concept comes from the theory of Tits buildings \cite{Tits}; 
apartments in a building are not related to commutativity 
(they correspond to a Coxeter group which defines the building type), 
but they have a similar geometrical structure.

\section{Results}
Let $H$ be a  complex Hilbert space (not necessarily separable) of dimension not less than $3$.
For a closed subspace $X\subset H$ we denote by $P_X$ the projection on $X$,
i.e. the self-adjoint idempotent whose range is $X$. 
Projections $P_{X}$ and $P_{Y}$ commute if and only if the subspaces $X$ and $Y$ are {\it compatible},
i.e. there is an orthonormal basis of $H$ such that $X$ and $Y$ are spanned by subsets of this basis. 
The conjugacy class of rank-$k$  projections is denoted by ${\mathcal P}_{k}(H)$. 
For a non-zero scalar $\lambda$ the conjugacy class $\lambda{\mathcal P}_{k}(H)$ 
is formed by all operators $\lambda P$ with $P\in {\mathcal P}_{k}(H)$,
such operators  are self-adjoint only in the case when $\lambda$ is real.

If $U$ is a unitary or anti-unitary operator on $H$, then the bijective transformation of $\mathcal P_k(H)$ 
sending every $P\in\mathcal P_k(H)$ to $UPU^{*}$ is commutativity preserving in both directions.
Note that $$UP_{X}U^{*}=P_{U(X)}$$
for every closed subspace $X\subset H$.

If projections $P_X,P_Y$ commute, then $P_{X}$ commutes with $P_{Y^{\perp}}={\rm Id}-P_Y$.
In the case when $\dim H=2k$, for every projection $P\in \mathcal P_k(H)$ the projection ${\rm Id}-P$ also belongs to $\mathcal P_k(H)$.
It is not difficult to construct a subset ${\mathcal X}\subset \mathcal P_k(H)$ such that for every $P\in {\mathcal X}$
we have ${\rm Id}-P\in {\mathcal X}$. 
The transformation of $\mathcal P_k(H)$ which sends every $P\in {\mathcal X}$ to ${\rm Id}-P$
and leaves fixed all $P\not\in {\mathcal X}$ is commutativity preserving in both directions.

\begin{theorem}\label{theorem-1}
Let $k$ be an integer satisfying $0<k<\dim H$. 
In the case when $\dim H=2k$, we also assume that $k\ge 4$.
Then for every bijective transformation $f$ of ${\mathcal P}_k(H)$ preserving the commutativity in both 
directions one of the following possibilities is realized:
\begin{enumerate}
\item[$\bullet$] $f$ is induced by a unitary or anti-unitary operator;
\item[$\bullet$] $\dim H=2k$ and there is a unitary or anti-unitary operator $U$ such that
for every $P\in {\mathcal P}_k(H)$ we have either 
$$f(P)=UPU^*\;\mbox{ or }\; f(P)={\rm Id}-UPU^*.$$
\end{enumerate}
\end{theorem}

Let $A$ be a compact self-adjoint operator on $H$ whose spectrum is $\{\alpha_i\}_{i\in I}$.
Then all $\alpha_i$ are real, the index set  $I$ is finite or countable and
there is an orthonormal basis of $H$ consisting of eigenvectors of $A$. 
The closure of the range $\overline{{\rm R}(A)}$ is the orthogonal complement of the kernel ${\rm Ker}(A)$.
Denote by $X_{i}$ the eigenspace of $A$ corresponding to $\alpha_i$.
All eigenspaces  corresponding to non-zero eigenvalues are finite-dimensional and 
$$A=\sum_{i\in I}\alpha_{i}P_{X_i}.$$
Let $B$ be a compact self-adjoint  operator  on $H$ whose spectrum is $\{\beta_j\}_{j\in J}$
and  let $Y_j$ be the eigenspace of $B$ corresponding to $\beta_j$.
The operators $A,B$ commute if and only if every pair $P_{X_i},P_{Y_j}$ commute,
i.e. $X_{i},Y_j$ are compatible for all $i\in I$ and $j\in J$
(von Neumann's theorem on projection-valued measures \cite[Section VI.2]{Var}). 
The operators $A$ and $B$ belong to the same conjugacy class
(i.e. there is a unitary operator $U$ on $H$ such that $B=UAU^{*}$)
if and only if there is a bijection $\delta:I\to J$ such that
$$\alpha_{i}=\beta_{\delta(i)}\;\mbox{ and }\;\dim X_{i}=\dim Y_{\delta(i)}.$$
Therefore, every conjugacy class of compact self-adjoint operators 
is completely determined by the spectrum and the dimensions of the corresponding eigenspaces. 

Let ${\mathcal C}$ be a conjugacy class of compact self-adjoint operators on $H$
whose spectrum is $\{\alpha_i\}_{i\in I}$ and let
$\{n_i\}_{i\in I}$ be the dimensions of the corresponding eigenspaces,
i.e. the eigenspaces corresponding to $a_i$ are of dimension $n_i$.
Note that $n_i$ can be infinity only in the case when $a_i=0$. 

Denote by $S({\mathcal C})$ the group of all permutations $\delta$ on $I$ such that 
$n_i=n_{\delta(i)}$ for every $i\in I$. 
Let $A\in {\mathcal C}$ and let $X_i$ be the eigenspace of $A$ corresponding to $\alpha_i$.
For every $\delta\in S({\mathcal C})$ the operator 
$$\delta(A)=\sum_{i\in I} \alpha_{i}P_{X_{\delta(i)}}$$
belongs to ${\mathcal C}$.
If $A,B\in {\mathcal C}$ commute, then $\delta(A)$ and $\gamma(B)$ commute for 
all $\delta,\gamma\in S({\mathcal C})$. 

If $U$ is a unitary or anti-unitary operator on $H$, then the bijective transformation of ${\mathcal C}$ 
sending every $A\in {\mathcal C}$ to $UAU^{*}$ is commutativity preserving in both directions.
For every $$A=\sum_{i\in I}\alpha_{i}P_{X_i}\in {\mathcal C}$$
we have 
$$UAU^{*}=\sum_{i\in I}\alpha_{i}P_{U(X_i)}$$
which implies that
$$U\delta(A)U^{*}=\delta(UAU^*)$$ for every $\delta\in S({\mathcal C})$.

Our main result is the following generalization of Theorem \ref{theorem-1}.

\begin{theorem}\label{theorem-2}
Suppose that the dimension of the kernels of operators from ${\mathcal C}$ is not less than the dimension of their ranges.
In the case when ${\mathcal C}$ is formed by operators of finite rank $k$ and $\dim H=2k$,
we additionally require that $k\ge 4$.
Then for every bijective transformation $f$ of ${\mathcal C}$ preserving the commutativity in both directions
there is a unitary or anti-unitary operator $U$ on $H$ 
and for every $A\in {\mathcal C}$ there is a permutation $\delta_A \in S({\mathcal C})$ such that
$$f(A)=U\delta_{A}(A)U^{*}.$$
In particular, $f$ is induced by a unitary or anti-unitary operator only in the case when all $n_i$ are mutually distinct.
\end{theorem}

Self-adjoint operators $A,B$ are called {\it orthogonal} if $AB=BA=0$, i.e. their ranges are orthogonal.
Two distinct operators from $\lambda{\mathcal P}_{1}(H)$ commute if and only if they are orthogonal. 
Hence for ${\mathcal C}=\lambda{\mathcal P}_{1}(H)$ Theorem \ref{theorem-2} follows immediately from  Uhlhorn's result \cite{Uhlhorn}:
every bijective transformation of the set of all $1$-dimensional subspaces of $H$ preserving the orthogonality relation in both directions
is induced by a unitary or anti-unitary operator.

It was noted in the Introduction that 
the crucial tool used to prove Theorem \ref{theorem-2} is the family of orthogonal apartments.
The {\it orthogonal apartment} in ${\mathcal C}$ associated to an orthonormal basis of $H$
consists of all operators $A\in {\mathcal C}$ such that every vector from this basis is an eigenvector of $A$.
Any two operators from an orthogonal apartment commute.
Furthermore, for every subset of ${\mathcal C}$ formed by mutually commuting operators
there is an orthogonal apartment containing this subset \cite[Proposition 1.15]{Pankov-book}.
Therefore, orthogonal apartments can be characterized as subsets of ${\mathcal C}$ maximal with respect to the property that 
any two elements commute. 
In particular, if $f$ is a bijective transformation of ${\mathcal C}$ preserving the commutativity in both directions,
then $f$ and $f^{-1}$ send orthogonal apartments to orthogonal apartments.

\section{Orthogonal apartments}
Let ${\mathcal C}$ be a conjugacy class of compact self-adjoint operators on $H$ distinct from  $\lambda {\mathcal P}_{1}(H)$.
As in Theorem \ref{theorem-2}, we assume that the dimension of the kernels of operators from ${\mathcal C}$
is not less than the dimension of their ranges.
Since the operator rank is not less than $2$, the same holds for the dimension of the kernels.
We will distinguish the following exceptional cases:
\begin{enumerate}
\item[$(*)_1$] $\dim H=4$ and ${\mathcal C}=\lambda {\mathcal P}_{2}(H)$,
\item[$(*)_2$] $\dim H=4$ and for every operator from ${\mathcal C}$ there are precisely two $1$-dimen\-sional eigenspaces corresponding to 
non-zero eigenvalues.
\end{enumerate}

Let ${\mathcal B}=\{e_{i}\}_{i\in I}$ be an orthonormal basis of $H$.
The associated orthogonal apartment ${\mathcal A}\subset {\mathcal C}$
consists of all operators $A\in{\mathcal C}$ such that every $e_i$ is an eigenvector of $A$.
A subset ${\mathcal X}\subset{\mathcal A}$ is called {\it inexact}
if there is an orthogonal apartment of ${\mathcal C}$ distinct from ${\mathcal A}$ and containing ${\mathcal X}$.

For any distinct $i,j\in I$ we denote by ${\mathcal A}_{ij}$ 
the set of all operators $A\in {\mathcal A}$ such that there is an eigenspace of $A$ containing both $e_i,e_j$. 
The set ${\mathcal A}_{ij}$  is not empty
(since for every operator from ${\mathcal C}$ there is an eigenspace of dimension not less than $2$). 
In the $2$-dimensional subspace spanned by $e_i,e_j$ we take any orthogonal unit vectors $e'_{i},e'_{j}$ which are not scalar multiples of $e_i,e_j$. If ${\mathcal A'}$ is the orthogonal apartment associated to 
$$({\mathcal B}\setminus \{e_i,e_j\})\cup\{e'_i,e'_j\},$$
then ${\mathcal A}\cap{\mathcal A}'={\mathcal A}_{ij}$,
i.e. ${\mathcal A}_{ij}$ is an inexact subset. 

\begin{lemma}\label{lemma-in}
Every maximal inexact subset is of type ${\mathcal A}_{ij}$.
\end{lemma}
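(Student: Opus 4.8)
The claim has two halves: every $\mathcal{A}_{ij}$ is a maximal orthogonally inexact subset, and conversely every maximal orthogonally inexact subset has this form. For the first half, the plan is to start from the fact (already noted before the statement) that $\mathcal{A}_{ij} = \mathcal{A}\cap\mathcal{A}'$ is orthogonally inexact, and then argue maximality: if $\mathcal{X}\supsetneq\mathcal{A}_{ij}$ is orthogonally inexact, pick $A\in\mathcal{X}\setminus\mathcal{A}_{ij}$, so $e_i,e_j$ lie in different eigenspaces of $A$ (with possibly one of them in the kernel). I would then show that any orthogonal apartment $\mathcal{A}''$ containing $\mathcal{A}_{ij}\cup\{A\}$ must already equal $\mathcal{A}$: the operators in $\mathcal{A}_{ij}$ force $\mathcal{A}''$ to be ``compatible with $\mathcal{B}$ away from the $e_i,e_j$ plane,'' while $A$ separating $e_i$ from $e_j$ pins down the last two basis lines as $\mathbb{C}e_i$ and $\mathbb{C}e_j$. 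This is where the hypothesis that \emph{some} eigenspace has dimension $\ge 3$ is essential: it guarantees $\mathcal{A}_{ij}$ is rich enough (contains operators whose ``large'' eigenspace can absorb, or omit, each individual $e_\ell$ with $\ell\ne i,j$) to recover every basis line except $e_i,e_j$ up to scalars, via an argument of the type ``if a closed subspace is compatible with enough members of $\mathcal{A}_{ij}$ it must be spanned by a subset of $\{e_i,e_j\}\cup(\mathcal{B}\setminus\{e_i,e_j\})$.''

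For the converse, let $\mathcal{X}$ be an arbitrary maximal orthogonally inexact subset of $\mathcal{A}$, so $\mathcal{X}\subset\mathcal{A}\cap\mathcal{A}'$ for some orthogonal apartment $\mathcal{A}'\ne\mathcal{A}$, associated to a basis $\mathcal{B}'$. The key is to understand $\mathcal{A}\cap\mathcal{A}'$: an operator lies in both apartments iff all of its eigenspaces are spanned by subsets of $\mathcal{B}$ \emph{and} by subsets of $\mathcal{B}'$. I would pass to the partition of $H$ induced by comparing the two ``coordinate systems'': decompose $H$ into the common refinement of the two orthogonal decompositions into lines, i.e. into blocks $H = \bigoplus_t W_t$ where each $W_t$ is simultaneously spanned by a subset of $\mathcal{B}$ and a subset of $\mathcal{B}'$ and is minimal such. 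Since $\mathcal{B}\ne\mathcal{B}'$ (up to rescaling lines), at least one block $W_{t_0}$ has dimension $\ge 2$. An operator $A\in\mathcal{A}$ lies in $\mathcal{A}'$ precisely when, on each block $W_t$, $A$ is constant (a scalar) — because any nonconstant self-adjoint operator on $W_{t_0}$ diagonal in the $\mathcal{B}$-coordinates but with a smaller eigenspace would refine the block, contradicting minimality unless it is compatible with $\mathcal{B}'$ too, which forces it constant on $W_{t_0}$. Thus $\mathcal{A}\cap\mathcal{A}'$ equals $\{A\in\mathcal{A}: A \text{ is scalar on each block } W_t\}$, which is contained in $\mathcal{A}_{ij}$ as soon as $W_{t_0}$ contains two of the original basis vectors $e_i,e_j$ — and it does, because a block of dimension $\ge 2$ spanned by a subset of $\mathcal{B}$ contains at least two members of $\mathcal{B}$. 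Hence $\mathcal{X}\subset\mathcal{A}\cap\mathcal{A}'\subset\mathcal{A}_{ij}$, and by maximality of $\mathcal{X}$ together with the first half (which gives $\mathcal{A}_{ij}$ orthogonally inexact), we conclude $\mathcal{X}=\mathcal{A}_{ij}$.

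The main obstacle I anticipate is the first half — showing that adding even a single operator $A$ outside $\mathcal{A}_{ij}$ destroys inexactness, i.e. that $\mathcal{A}_{ij}$ together with such an $A$ lies in no apartment but $\mathcal{A}$. This requires a careful ``reconstruction'' argument: from the operators in $\mathcal{A}_{ij}$ one must recover each line $\mathbb{C}e_\ell$, $\ell\ne i,j$, as a canonically determined one-dimensional subspace, and from $A$ one must recover $\mathbb{C}e_i$ and $\mathbb{C}e_j$ separately. The dimension-$\ge 3$ hypothesis, and (in the boundary case $\dim H = 2k$) the assumption $k\ge 4$, should be exactly what is needed to ensure $\mathcal{A}_{ij}$ is not too small for this reconstruction — for instance, that for each $\ell\ne i,j$ there is an operator in $\mathcal{A}_{ij}$ whose eigenspace structure singles out $e_\ell$. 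I would organize this as a short sublemma: \emph{a closed subspace $Z\subset H$ is compatible with every operator in $\mathcal{A}_{ij}$ if and only if $Z$ is spanned by a subset of $\mathcal{B}$ or differs from such a subspace only inside $\mathrm{span}(e_i,e_j)$}; granting that, the first half follows immediately.
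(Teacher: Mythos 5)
Your second paragraph is, on its own, a complete and correct proof of the lemma, and it is essentially the paper's argument in different packaging. The paper sets $S_i$ equal to the intersection of all eigenspaces of operators from $\mathcal{X}$ containing $e_i$, notes that if every $S_i$ is a line then $\mathcal{A}$ is the unique apartment containing $\mathcal{X}$ (contradicting inexactness), and otherwise picks $e_j\in S_i$ with $j\ne i$ to get $\mathcal{X}\subseteq\mathcal{A}_{ij}$. Your blocks $W_t$ of the common refinement of $\mathcal{B}$ and $\mathcal{B}'$ play exactly the role of the $S_i$ (computed inside $\mathcal{A}\cap\mathcal{A}'$ rather than inside $\mathcal{X}$), and your block of dimension $\ge 2$ is the paper's $S_i$ with $\dim S_i\ge 2$; the small facts you leave implicit (that the blocks form a well-defined orthogonal decomposition, and that an operator of $\mathcal{A}$ lies in $\mathcal{A}'$ iff it is scalar on each block) are routine and true. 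Where you diverge from the paper is the ``first half,'' which you flag as the main obstacle: proving that each $\mathcal{A}_{ij}$ is a \emph{maximal} orthogonally inexact subset by a reconstruction of the lines $\mathbb{C}e_\ell$, $\ell\ne i,j$, from $\mathcal{A}_{ij}$ alone. That half is not needed for the statement. The lemma only asserts that a maximal orthogonally inexact subset equals some $\mathcal{A}_{ij}$, and this already follows from your second paragraph combined with the fact, established just before the lemma via the basis $(\mathcal{B}\setminus\{e_i,e_j\})\cup\{e'_i,e'_j\}$, that $\mathcal{A}_{ij}$ is orthogonally inexact: if $\mathcal{X}$ is maximal orthogonally inexact and $\mathcal{X}\subseteq\mathcal{A}_{ij}$ with $\mathcal{A}_{ij}$ itself orthogonally inexact, then $\mathcal{X}=\mathcal{A}_{ij}$. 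This is precisely the reduction with which the paper opens its proof (``it is sufficient to show that every non-empty orthogonally inexact subset is contained in a certain $\mathcal{A}_{ij}$''), so your reconstruction sublemma, while it looks workable, is superfluous effort here.
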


\begin{proof}
It is sufficient to show that every non-empty inexact subset ${\mathcal X}\subset {\mathcal A}$
is contained in a certain ${\mathcal A}_{ij}$.
For every $i\in I$ we denote by $S_i$ the intersection of all eigenspaces of operators from ${\mathcal X}$ containing $e_i$.
Then $\dim S_i>0$ for all $i\in I$ (since ${\mathcal X}$ is non-empty).
If all $S_{i}$ are $1$-dimensional, 
then every orthogonal apartment containing ${\mathcal X}$ coincides with ${\mathcal A}$ 
which contradicts the assumption that ${\mathcal X}$ is inexact.
Therefore, there is at least one $i\in I$ such that $\dim S_{i}\ge 2$;
we take any $e_{j}\in S_{i}$ with $j\ne i$ and obtain that ${\mathcal X}\subset {\mathcal A}_{ij}$.
\end{proof}

If $\dim H\ge 5$, then 
$${\mathcal A}_{ij}\ne {\mathcal A}_{i'j'}\;\mbox{ for }\;\{i,j\}\ne \{i',j'\}.$$
The dimension of the kernels of operators from ${\mathcal C}$ is not less than $3$.
In the case when $\{i,j\}\cap \{i',j'\}=\emptyset$, there is $A\in {\mathcal A}_{ij}$ whose kernel contains $e_i,e_j,e_{i'}$ and does not contain $e_{j'}$.
If $i'\in\{i,j\}$ and $j'\not\in\{i,j\}$, then  we take $A\in {\mathcal A}_{ij}$ whose kernel contains $e_i,e_j$ and does not contain $e_{j'}$.
We have $A\not\in{\mathcal A}_{i'j'}$ for each case.

If $\dim H=4$ and ${\mathcal C}=\lambda {\mathcal P}_2(H)$, then for every $A\in {\mathcal C}$ there is a unique operator $B\in {\mathcal C}$
orthogonal to $A$ and every maximal inexact subset consists of two orthogonal operators.
This means that ${\mathcal A}_{ij}={\mathcal A}_{i'j'}$ for $\{i',j'\}=\{1,2,3,4\}\setminus\{i,j\}$.

In the case $(*)_2$, for every $2$-dimensional subspace $S$ spanned by vectors from ${\mathcal B}$
there are precisely two operators from ${\mathcal A}$ whose kernels coincide with $S$; 
every maximal inexact subset is such a pair.

The intersection of two distinct maximal inexact subsets is empty if $\dim H=4$.
In the case when $\dim H\ge 5$,
two distinct maximal inexact subsets ${\mathcal A}_{ij}$ and ${\mathcal A}_{i'j'}$ are said to be {\it adjacent}
if $\{i,j\}\cap \{i',j'\}\ne\emptyset$.

Let $f$ be a bijective transformation of ${\mathcal C}$  preserving the commutativity in both directions.
It sends ${\mathcal A}$ to a certain orthogonal apartment ${\mathcal A}'$;
furthermore, ${\mathcal X}\subset {\mathcal A}$ is a maximal inexact subset
if and only if $f({\mathcal X})$ is a maximal  inexact subset of ${\mathcal A}'$.

\begin{lemma}\label{lemma-ad}
Suppose that $\dim H\ge 5$.
Then maximal inexact subsets ${\mathcal X},{\mathcal Y}\subset{\mathcal A}$
are adjacent if and only if the maximal  inexact subsets
$f({\mathcal X}),f({\mathcal Y})\subset {\mathcal A}'$ are adjacent.
\end{lemma}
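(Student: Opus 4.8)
The plan is to characterize adjacency of maximal orthogonally inexact subsets purely in terms of the combinatorics of how such subsets intersect and how they sit inside the collection of \emph{all} maximal orthogonally inexact subsets of $\mathcal{A}$ — since this collection, together with the inclusion order from $\mathcal{C}$, is preserved by $f$ (as noted just before the lemma, $\mathcal{X}\subset\mathcal{A}$ is maximal orthogonally inexact iff $f(\mathcal{X})$ is maximal orthogonally inexact in $\mathcal{A}'$). Concretely, by Lemma \ref{lemma-in} every maximal orthogonally inexact subset of $\mathcal{A}$ is some $\mathcal{A}_{ij}$, so I must recover the condition $|\{i,j\}\cap\{i',j'\}|=1$ from set-theoretic relations among the $\mathcal{A}_{ij}$ that do not mention the basis directly. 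The natural candidate is to show that two distinct $\mathcal{A}_{ij}$ and $\mathcal{A}_{i'j'}$ are adjacent if and only if there exists a third maximal orthogonally inexact subset $\mathcal{A}_{i''j''}$ with a prescribed relationship to both — for instance, one contained in $\mathcal{A}_{ij}\cup\mathcal{A}_{i'j'}$ in a suitable sense, or one that together with the first two has a nonempty triple intersection of a specific kind.

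First I would analyze the intersections $\mathcal{A}_{ij}\cap\mathcal{A}_{i'j'}$ by cases. If $\{i,j\}=\{i',j'\}$ the sets coincide. If the pairs are disjoint, then $\mathcal{A}_{ij}\cap\mathcal{A}_{i'j'}$ consists of operators having an eigenspace containing $e_i,e_j$ and an eigenspace containing $e_{i'},e_{j'}$ (possibly the same one). If $|\{i,j\}\cap\{i',j'\}|=1$, say $j=j'$, then the intersection consists of operators with an eigenspace containing $e_i,e_j$ and one containing $e_j,e_{j''}$; whenever these two eigenspaces must be distinct this forces an eigenspace of dimension at least $3$ containing all of $e_i,e_j,e_{j''}$, but when the relevant eigenspace dimensions allow it they can merge. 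The key structural difference I expect to exploit: in the adjacent case the three indices $i,j,j''$ can lie in a common eigenspace, producing operators that belong simultaneously to $\mathcal{A}_{ij}$, $\mathcal{A}_{jj''}$ and $\mathcal{A}_{ij''}$ — three pairwise distinct maximal orthogonally inexact subsets with nonempty common intersection — whereas in the disjoint case no single maximal orthogonally inexact subset can relate to $\mathcal{A}_{ij}$ and $\mathcal{A}_{i'j'}$ in the same triangular way. So the plan is to prove: $\mathcal{A}_{ij}$ and $\mathcal{A}_{i'j'}$ are adjacent iff there is a maximal orthogonally inexact subset $\mathcal{A}_{i''j''}$, distinct from both, such that $\mathcal{A}_{ij}\cap\mathcal{A}_{i'j'}\cap\mathcal{A}_{i''j''}\neq\emptyset$ and moreover this triple intersection is strictly larger than (or otherwise distinguished from) each pairwise intersection. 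One then checks this property is intrinsic — stated only with $\cap$, $=$ and "maximal orthogonally inexact" — hence transported by $f$.

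The key steps, in order: (1) compute $\mathcal{A}_{ij}\cap\mathcal{A}_{i'j'}$ in the three cases and record which configurations of indices can coexist in a single eigenspace, using that $\mathcal{C}$ always has an eigenspace of dimension $\ge 3$ (the kernel) so such "merging" is genuinely available; (2) formulate the intrinsic condition $\Phi(\mathcal{X},\mathcal{Y})$ — existence of a third maximal orthogonally inexact $\mathcal{Z}$ forming the triangular intersection pattern — and show $\Phi(\mathcal{A}_{ij},\mathcal{A}_{i'j'})$ holds exactly when $|\{i,j\}\cap\{i',j'\}|=1$; (3) observe $\Phi$ is preserved by $f$ and $f^{-1}$ since both map maximal orthogonally inexact subsets to maximal orthogonally inexact subsets and commute with intersection and equality; (4) conclude. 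The main obstacle is step (2): I must make sure the intrinsic condition $\Phi$ genuinely separates the adjacent case from both the "equal" and "disjoint" cases, and the delicate point is the disjoint case, where one could worry that some $\mathcal{A}_{i''j''}$ with $\{i'',j''\}$ overlapping both pairs — e.g. $i''=i$, $j''=i'$ — might accidentally satisfy the same intersection pattern. Ruling this out requires a careful eigenspace-dimension count showing that for disjoint $\{i,j\},\{i',j'\}$ no choice of $\{i'',j''\}$ can reproduce the triangular pattern, whereas for adjacent pairs the choice $\{i'',j''\}=(\{i,j\}\cup\{i',j'\})\setminus(\{i,j\}\cap\{i',j'\})$ always works because $e_i,e_j,e_{j''}$ fit in one eigenspace; the finite-rank, $\dim H=2k$, $k\ge 4$ hypothesis is presumably exactly what guarantees enough room for these constructions in the borderline case.
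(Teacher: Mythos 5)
Your overall strategy---find a characterization of adjacency that is intrinsic to the collection of maximal orthogonally inexact subsets and their intersections, and then transport it by $f$---is exactly the paper's. The gap is in your step (2): the condition $\Phi$ is never actually pinned down, and each reading of what you write either fails or is incoherent. ``The triple intersection is strictly larger than each pairwise intersection'' cannot happen, since ${\mathcal X}\cap{\mathcal Y}\cap{\mathcal Z}\subset{\mathcal X}\cap{\mathcal Y}$. If $\Phi$ is merely ``there is a third maximal orthogonally inexact subset ${\mathcal Z}$ with ${\mathcal X}\cap{\mathcal Y}\cap{\mathcal Z}\ne\emptyset$,'' it holds for every pair with non-empty intersection. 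And the most charitable precise reading---``${\mathcal X}\cap{\mathcal Y}$ is non-empty and contained in some third maximal orthogonally inexact subset''---is where your key expectation, that for disjoint $\{i,j\},\{i',j'\}$ no choice of $\{i'',j''\}$ reproduces the triangular pattern, breaks down. Take ${\mathcal C}$ for which every operator has exactly one eigenspace of dimension $>1$, namely the kernel, and that kernel has dimension $\ge 4$ (e.g.\ rank-$2$ operators with two simple non-zero eigenvalues on a $6$-dimensional $H$, which satisfies all standing hypotheses). Then ${\mathcal A}_{ij}$ is simply the set of operators in ${\mathcal A}$ whose kernel contains $e_i,e_j$, and for mutually distinct $i,j,i',j'$ the intersection ${\mathcal A}_{ij}\cap{\mathcal A}_{i'j'}$ is the non-empty set of operators whose kernel contains all four vectors; it is therefore contained in ${\mathcal A}_{ii'},{\mathcal A}_{ij'},{\mathcal A}_{ji'},{\mathcal A}_{jj'}$ as well, i.e.\ in six maximal orthogonally inexact subsets, so the ``triangle'' is abundantly realized for a non-adjacent pair. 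This is precisely why the paper splits the proof into two cases: when the kernel is the only eigenspace of dimension $>1$, adjacency is characterized by maximality of ${\mathcal X}\cap{\mathcal Y}$ under inclusion among all pairwise intersections; only when every operator has at least two eigenspaces of dimension $\ge 2$ does it use the count of maximal orthogonally inexact subsets containing ${\mathcal X}\cap{\mathcal Y}$ (three for adjacent pairs versus two for disjoint ones, the latter because one can then place $e_i,e_j$ and $e_{i'},e_{j'}$ into two different eigenspaces of a single operator).

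A secondary error: you attribute the needed ``room'' to the hypothesis $\dim H=2k$, $k\ge 4$. That hypothesis is only one of the three standing cases and plays no special role in this lemma; the relevant inputs are that every operator in ${\mathcal C}$ has an eigenspace of dimension $\ge 3$ (which makes the adjacent-pair intersections non-empty) and the dichotomy ``one versus at least two eigenspaces of dimension $\ge 2$.'' To repair the argument you must either adopt the paper's two-case analysis or verify that a single condition (for instance ``${\mathcal X}\cap{\mathcal Y}$ is non-empty and contained in exactly three maximal orthogonally inexact subsets'') works uniformly---and that verification is exactly the eigenspace-dimension bookkeeping you deferred.
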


\begin{proof}
Since $\dim H\ge 5$, the dimension of the kernels of operators from ${\mathcal C}$ is not less than $3$.

First, we consider the case when for every operator from ${\mathcal C}$
there is precisely one eigenspace of dimension greater than $1$. 
Note that this eigenspace  is the kernel and its dimension is not less than $3$.
The intersection of adjacent ${\mathcal A}_{ij}$ and ${\mathcal A}_{ij'}$
consists of all operators  from ${\mathcal A}$ whose kernels contain $e_i,e_j,e_{j'}$
(such operators exist). 
If $i,j,i',j'\in I$ are mutually distinct, then ${\mathcal A}_{ij}\cap {\mathcal A}_{i'j'}$ 
is formed by all operators  from ${\mathcal A}$ whose kernels contain $e_i,e_j,e_{i'},e_{j'}$
and this intersection is empty if the kernels are $3$-dimensional.
So, ${\mathcal A}_{ij}\cap {\mathcal A}_{i'j'}$ is empty or it is a proper subspace of ${\mathcal A}_{ij}\cap {\mathcal A}_{ij'}$.
In other words, pairs formed by adjacent maximal inexact subsets 
can be characterized as pairs of maximal inexact subsets with maximal intersections.
This gives the claim.

Suppose that for every operator from ${\mathcal C}$
there are at least two eigenspaces of dimension greater than $1$.
One of these eigenspaces is the kernel whose dimension  is not less than $3$.
As above, we characterize the intersection of two adjacent maximal inexact subsets, 
but in other terms.

For adjacent ${\mathcal A}_{ij},{\mathcal A}_{ij'}$ an operator $A\in {\mathcal A}$ belongs to ${\mathcal A}_{ij}\cap{\mathcal A}_{ij'}$
if and only if there is an eigenspace of $A$ containing $e_i,e_j,e_{j'}$.
This intersection is not empty and
there are precisely $3$ distinct maximal  inexact subspaces of ${\mathcal A}$
containing ${\mathcal A}_{ij}\cap{\mathcal A}_{ij'}$; 
these are ${\mathcal A}_{ij}, {\mathcal A}_{ij'}$ and ${\mathcal A}_{jj'}$.

Let $i,j,i',j'$ be mutually distinct indices from $I$.
Since for every operator from ${\mathcal C}$ there are at least two eigenspaces of dimension greater than $1$, 
there is an operator $A\in {\mathcal A}_{ij}\cap {\mathcal A}_{i'j'}$
such that  $e_i,e_j$ and $e_{i'},e_{j'}$ belong to distinct eigenspaces of $A$. 
This operator is not contained in ${\mathcal A}_{ts}$ if $t\in \{i,j\}$ and $s\in \{i',j'\}$.
So, ${\mathcal A}_{ij}\cap {\mathcal A}_{i'j'}$ is contained in precisely two maximal  inexact subsets of ${\mathcal A}$. 
\end{proof}

We say that ${\mathcal X}\subset {\mathcal A}$ is a {\it complementary} subset
if ${\mathcal A}\setminus {\mathcal X}$ is a maximal  inexact subset.
The complementary subset ${\mathcal A}\setminus {\mathcal A}_{ij}$ will be denoted by ${\mathcal C}_{ij}$;
it consists of all operators $A\in {\mathcal A}$ such that $e_{i},e_{j}$ belong to eigenspaces of $A$ corresponding to distinct eigenvalues.
Note that ${\mathcal X}\subset {\mathcal A}$ is a complementary subset 
if and only if $f({\mathcal X})$ is a complementary subset of the orthogonal apartment ${\mathcal A}'=f({\mathcal A})$.

In the case when $\dim H\ge 5$, two complementary subsets of ${\mathcal A}$ are said to be {\it adjacent} if the corresponding 
maximal inexact subsets are adjacent.
By  Lemma \ref{lemma-ad},
complementary subsets ${\mathcal X},{\mathcal Y}\subset{\mathcal A}$ are adjacent if and only if 
the complementary subsets $f({\mathcal X}),f({\mathcal Y})\subset {\mathcal A}'$  are adjacent.

For distinct operators $A,B\in {\mathcal A}$ one of the following possibilities is realized:
\begin{enumerate}
\item[(1)] $A,B$ are orthogonal;
\item[(2)] $A,B$ are non-orthogonal and $\overline{{\rm R}(A)}\ne \overline{{\rm R}(B)}$;
\item[(3)] ${\mathcal C}\ne \lambda {\mathcal P}_k(H)$ and $\overline{{\rm R}(A)}=\overline{{\rm R}(B)}$;
in this case, ${\rm Ker}(A)={\rm Ker}(B)$.
\end{enumerate}
Since the operators $A,B$ commute, the subspaces $\overline{{\rm R}(A)},\overline{{\rm R}(B)}$ are compatible
and their sum is a closed subspace whose orthogonal complement is ${\rm Ker}(A)\cap {\rm Ker}(B)$.

Denote by ${\frak C}_{\mathcal A}(A,B)$ the family of all complementary subsets of ${\mathcal A}$ containing both $A,B$.

\begin{lemma}\label{lemma-orth}
If $A,B\in {\mathcal A}$ are orthogonal, then ${\frak C}_{\mathcal A}(A,B)$ 
consists of all ${\mathcal C}_{ij}$ such that one of $e_i,e_j$ belongs to $\overline{{\rm R}(A)}$
and the other to $\overline{{\rm R}(B)}$.
\end{lemma}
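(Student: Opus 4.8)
The plan is to translate the statement entirely into a question about which vectors of the basis $\mathcal{B}$ lie in the kernels, and which lie in the range closures, of $A$ and $B$. The starting observation is the standard consequence of orthogonality: from $AB=BA=0$ we get $\mathrm{R}(B)\subseteq\mathrm{Ker}(A)$ and $\mathrm{R}(A)\subseteq\mathrm{Ker}(B)$, and since the kernels are closed this yields $\overline{\mathrm{R}(A)}\subseteq\mathrm{Ker}(B)$ and $\overline{\mathrm{R}(B)}\subseteq\mathrm{Ker}(A)$; in particular $\overline{\mathrm{R}(A)}\cap\overline{\mathrm{R}(B)}=\{0\}$. The second preliminary point is that, because $A$ and $B$ lie in the apartment $\mathcal{A}$, each basis vector $e_k$ lies in exactly one eigenspace of $A$ (resp.\ of $B$), and that eigenspace is either $\mathrm{Ker}(A)$ or a summand of $\overline{\mathrm{R}(A)}$ according to whether $e_k\in\mathrm{Ker}(A)$ or $e_k\in\overline{\mathrm{R}(A)}$. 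Consequently, for $C\in\{A,B\}$ the relation $C\in\mathcal{C}_{ij}$ (which by definition means that $e_i$ and $e_j$ lie in distinct eigenspaces of $C$) forces that $e_i,e_j$ are not both in $\mathrm{Ker}(C)$, i.e.\ at least one of them lies in $\overline{\mathrm{R}(C)}$; and conversely, if exactly one of $e_i,e_j$ lies in $\mathrm{Ker}(C)$ then $C\in\mathcal{C}_{ij}$. Finally, by Lemma \ref{lemma-in} every orthocomplementary subset of $\mathcal{A}$ has the form $\mathcal{C}_{ij}$, so it suffices to decide, for each pair of distinct indices $i,j$, whether both $A$ and $B$ belong to $\mathcal{C}_{ij}$.

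With these in hand I would prove the two inclusions. For one direction, assume (after relabelling, which is harmless since the condition on $\mathcal{C}_{ij}$ is symmetric in $i,j$) that $e_i\in\overline{\mathrm{R}(A)}$ and $e_j\in\overline{\mathrm{R}(B)}$. Then $e_j\in\overline{\mathrm{R}(B)}\subseteq\mathrm{Ker}(A)$ while $e_i\notin\mathrm{Ker}(A)$, so exactly one of $e_i,e_j$ lies in $\mathrm{Ker}(A)$ and hence $A\in\mathcal{C}_{ij}$; symmetrically $e_i\in\overline{\mathrm{R}(A)}\subseteq\mathrm{Ker}(B)$ and $e_j\notin\mathrm{Ker}(B)$, so $B\in\mathcal{C}_{ij}$, and therefore $\mathcal{C}_{ij}\in{\frak C}_{\mathcal A}(A,B)$. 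For the other direction, suppose $A,B\in\mathcal{C}_{ij}$. From $A\in\mathcal{C}_{ij}$ at least one of $e_i,e_j$ lies in $\overline{\mathrm{R}(A)}$, and from $B\in\mathcal{C}_{ij}$ at least one of them lies in $\overline{\mathrm{R}(B)}$; since $\overline{\mathrm{R}(A)}\cap\overline{\mathrm{R}(B)}=\{0\}$ these two witnesses cannot be the same vector, so (after relabelling) $e_i\in\overline{\mathrm{R}(A)}$ and $e_j\in\overline{\mathrm{R}(B)}$, which is exactly the asserted condition on $\mathcal{C}_{ij}$. Combined with the reduction of the previous paragraph, this proves the equality of the two families.

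I do not expect a genuine obstacle in this lemma: once orthogonality is rephrased as $\overline{\mathrm{R}(A)}\perp\overline{\mathrm{R}(B)}$, everything reduces to bookkeeping about basis vectors inside the apartment. The only point deserving care is to keep clearly in mind that an operator in $\mathcal{A}$ has each basis vector sitting in a single eigenspace, so that ``$e_i$ and $e_j$ lie in distinct eigenspaces'' and ``exactly one of $e_i,e_j$ lies in the kernel'' match up cleanly in the situation at hand, and to invoke Lemma \ref{lemma-in} so that the enumeration can be restricted to the sets $\mathcal{C}_{ij}$ rather than run over all orthocomplementary subsets.
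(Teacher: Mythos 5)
Your proof is correct, and it is exactly the ``direct verification'' that the paper leaves to the reader: reduce orthogonality to $\overline{{\rm R}(A)}\subseteq{\rm Ker}(B)$, $\overline{{\rm R}(B)}\subseteq{\rm Ker}(A)$, and then check membership in each $\mathcal{C}_{ij}$ by tracking which eigenspace of $A$ (resp.\ $B$) each basis vector lies in. Nothing further is needed.
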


\begin{proof}
Direct verification.
\end{proof}

\begin{lemma}\label{lemma-nonorth}
In the case {\rm (2)},
the family ${\frak C}_{\mathcal A}(A,B)$  contains every ${\mathcal C}_{ij}$ satisfying one of the following conditions:
\begin{enumerate}
\item[$\bullet$] one of $e_{i},e_j$ belongs to $\overline{{\rm R}(A)}\cap \overline{{\rm R}(B)}$ and the other to 
${\rm Ker}(A)\cap {\rm Ker}(B)$,
\item[$\bullet$] one of $e_{i},e_j$ belongs to $\overline{{\rm R}(A)}\setminus \overline{{\rm R}(B)}$
and the other to $\overline{{\rm R}(B)}\setminus \overline{{\rm R}(A)}$.
\end{enumerate}
\end{lemma}

\begin{proof}
Direct verification.
\end{proof}

\begin{lemma}\label{lemma-same-im}
In the case {\rm (3)},
the family ${\frak C}_{\mathcal A}(A,B)$  contains every ${\mathcal C}_{ij}$ such that 
one of $e_i,e_j$ belongs to $\overline{{\rm R}(A)}=\overline{{\rm R}(B)}$ and the other to ${\rm Ker}(A)={\rm Ker}(B)$.
Also, ${\frak C}_{\mathcal A}(A,B)$  contains some ${\mathcal C}_{ij}$ such that
both $e_i,e_j$ belong to $\overline{{\rm R}(A)}=\overline{{\rm R}(B)}$.
\end{lemma}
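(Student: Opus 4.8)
The first assertion I would dispose of by the same kind of direct verification as in the previous two lemmas: if $e_{i}\in\overline{{\rm R}(A)}=\overline{{\rm R}(B)}$ and $e_{j}\in{\rm Ker}(A)={\rm Ker}(B)$, then for each of $A$ and $B$ the vector $e_{i}$ lies in a non-zero eigenspace while $e_{j}$ lies in the zero eigenspace, so $e_{i},e_{j}$ belong to distinct eigenspaces of $A$ and of $B$; this is exactly the condition ${\mathcal C}_{ij}\in{\frak C}_{\mathcal A}(A,B)$. For the second assertion the plan is to work inside $V=\overline{{\rm R}(A)}=\overline{{\rm R}(B)}$. Since $A,B\in{\mathcal A}$, the subspace $V$ is spanned by a subset ${\mathcal B}_{V}\subset{\mathcal B}$, and both $A$ and $B$ are diagonalized by ${\mathcal B}_{V}$. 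For $e\in{\mathcal B}_{V}$ let $a(e)$ and $b(e)$ denote the indices of the eigenspaces of $A$, respectively of $B$, that contain $e$. It then suffices to produce $e,e'\in{\mathcal B}_{V}$ with $a(e)\ne a(e')$ and $b(e)\ne b(e')$: writing $e=e_{i}$ and $e'=e_{j}$, the basis vectors $e_{i},e_{j}$ lie in $\overline{{\rm R}(A)}=\overline{{\rm R}(B)}$ and in distinct eigenspaces of both operators, hence ${\mathcal C}_{ij}\in{\frak C}_{\mathcal A}(A,B)$.

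To find such a pair I would argue by contradiction. If no such pair exists, then any two vectors of ${\mathcal B}_{V}$ share an eigenspace of $A$ or share an eigenspace of $B$; equivalently, any two elements of the set $T=\{(a(e),b(e)):e\in{\mathcal B}_{V}\}$ agree in at least one coordinate. The key point is the elementary observation that this forces either all elements of $T$ to have a common first coordinate, or all of them to have a common second coordinate: if $(a(e_{1}),b(e_{1}))$ and $(a(e_{2}),b(e_{2}))$ have $a(e_{1})\ne a(e_{2})$, they must agree in the second coordinate, and then a third element of $T$, which has to agree in some coordinate with each of these two, is forced to have that same second coordinate. The first alternative says that the restriction of $A$ to $V$ is a scalar operator, the second that the restriction of $B$ to $V$ is scalar. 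But in case (3) we have ${\mathcal C}\ne\lambda{\mathcal P}_{k}(H)$, which means the operators of ${\mathcal C}$ have at least two distinct non-zero eigenvalues, so neither restriction can be scalar; this contradiction yields the desired pair and proves the lemma.

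Everything here except the little combinatorial dichotomy is bookkeeping about which basis vector sits in which eigenspace, so I expect that dichotomy — together with correctly pinning down where the hypothesis ${\mathcal C}\ne\lambda{\mathcal P}_{k}(H)$ enters — to be the only step needing any thought, and it is precisely this hypothesis that rules out the two degenerate configurations the argument produces.
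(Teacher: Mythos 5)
Your proof is correct. The first assertion is handled exactly as in the paper (a direct verification), and for the second assertion both you and the paper reduce the problem to exhibiting two basis vectors of $V=\overline{{\rm R}(A)}=\overline{{\rm R}(B)}$ lying in distinct eigenspaces of $A$ and in distinct eigenspaces of $B$, with the hypothesis ${\mathcal C}\ne\lambda{\mathcal P}_k(H)$ entering exactly where you place it: it guarantees at least two distinct non-zero eigenvalues, so neither restriction to $V$ can be scalar. The execution differs, though. The paper argues directly by two cases: either $A$ and $B$ have the same eigenspaces, in which case one takes $e_i,e_j$ in two distinct eigenspaces corresponding to non-zero eigenvalues; or there are eigenspaces $X$ of $A$ and $Y$ of $B$, both for non-zero eigenvalues, with $X\ne Y$, in which case one takes $e_i\in X\setminus Y$ and $e_j\in Y\setminus X$. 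You instead argue by contradiction via the combinatorial dichotomy that a family of pairs, any two of which agree in some coordinate, must be constant in one coordinate; that dichotomy is correct (two pairs differing in the first coordinate force a common second coordinate on every further pair, and symmetrically). Your version is in fact slightly more robust: the paper's second case tacitly requires that both $X\setminus Y$ and $Y\setminus X$ contain basis vectors, i.e.\ that $X$ and $Y$ be chosen so that neither contains the other (always arrangeable, but not spelled out), whereas your dichotomy sidesteps any such choice. Both arguments buy the same conclusion; yours trades the explicit construction for a cleaner existence argument.
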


\begin{proof}
The first statement is obvious and we prove the second. 
If the operators $A,B$ have the same eigenspaces, i.e. $B=\delta(A)$ for a certain $\delta\in S({\mathcal C})$, 
then we take any $i,j\in I$ such that $e_i$ and $e_j$ belong to distinct eigenspaces corresponding to 
non-zero eigenvalues (since ${\mathcal C}\ne \lambda {\mathcal P}_k(H)$, there are at least two such eigenspaces).
If there are an eigenspace $X$ of $A$ and an eigenspace $Y$ of $B$
both corresponding to non-zero eigenvalues and such that $X\ne Y$, 
then any $e_i\in X\setminus Y$ and $e_j\in Y\setminus X$ are as required.
\end{proof}

\begin{prop}\label{prop-ortho}
If $\dim H\ge 5$  or $\dim H=4$ and ${\mathcal C}=\lambda{\mathcal P}_2(H)$, 
then every bijective transformation $f$ of ${\mathcal C}$ preserving the commutativity in both directions
is orthogonality preserving in both directions.
\end{prop}

\begin{proof}
Show that the restriction of $f$ to every orthogonal apartment is orthogonality preserving.
Let ${\mathcal A}$ be the orthogonal apartment of ${\mathcal C}$ associated to an orthonormal basis $\{e_i\}_{i\in I}$.
If $\dim H=4$ and ${\mathcal C}=\lambda{\mathcal P}_2(H)$, then the statement follows from the fact that
every maximal inexact subset of ${\mathcal A}$ is a pair of orthogonal operators.
Suppose that $\dim H\ge 5$.
For any distinct $A,B\in {\mathcal A}$ we have
$$f({\frak C}_{\mathcal A}(A,B))={\frak C}_{f({\mathcal A})}(f(A),f(B))$$
and two complementary subsets from the family ${\frak C}_{\mathcal A}(A,B)$ are adjacent if and only if 
their images are adjacent elements of ${\frak C}_{f({\mathcal A})}(f(A),f(B))$.

If $A,B\in {\mathcal A}$ are orthogonal, then ${\frak C}_{\mathcal A}(A,B)$ 
consists of all ${\mathcal C}_{ij}$ such that one of $e_i,e_j$ belongs to 
$\overline{{\rm R}(A)}$ and the other to $\overline{{\rm R}(B)}$ (Lemma \ref{lemma-orth}).
This means that for any distinct non-adjacent complementary subsets 
${\mathcal X},{\mathcal Y}\in {\frak C}_{\mathcal A}(A,B)$ there are precisely 
two complementary subsets ${\mathcal Z}_1,{\mathcal Z}_2\in {\frak C}_{\mathcal A}(A,B)$ 
adjacent to both ${\mathcal X},{\mathcal Y}$;
furthermore, ${\mathcal Z}_1$ and ${\mathcal Z}_2$ are not adjacent.
Indeed, if 
$${\mathcal X}={\mathcal C}_{ij},\;\mbox{ where }\;e_i\in \overline{{\rm R}(A)},\;\;\;
e_j\in \overline{{\rm R}(B)},$$
$${\mathcal Y}={\mathcal C}_{i'j'},\;\mbox{ where }\;e_{i'}\in \overline{{\rm R}(A)},\;\;\;
e_{j'}\in \overline{{\rm R}(B)}$$
and ${\mathcal Z}\in {\frak C}_{\mathcal A}(A,B)$ is adjacent to both ${\mathcal X},{\mathcal Y}$,
then ${\mathcal Z}$ is ${\mathcal C}_{ij'}$ or ${\mathcal C}_{i'j}$.

If $A,B\in {\mathcal A}$ are non-orthogonal and $\overline{{\rm R}(A)}\ne \overline{{\rm R}(B)}$
(the case (2)), then
$${\mathcal C}_{ij},\;\mbox{ where }\;e_i\in \overline{{\rm R}(A)}\cap\overline{{\rm R}(B)},\;\;\;
e_{j}\in {\rm Ker}(A)\cap {\rm Ker}(B)$$
and
$${\mathcal C}_{i'j'},\;\mbox{ where }\;e_{i'}\in \overline{{\rm R}(A)}\setminus \overline{{\rm R}(B)},\;\;\;
e_{j'}\in \overline{{\rm R}(B)}\setminus\overline{{\rm R}(A)}.$$
are non-adjacent elements of ${\frak C}_{\mathcal A}(A,B)$ (Lemma \ref{lemma-nonorth}).
Suppose that ${\frak C}_{\mathcal A}(A,B)$ contains two distinct complementary subsets adjacent to both
${\mathcal C}_{ij}$ and ${\mathcal C}_{i'j'}$.
Since $B\not\in{\mathcal C}_{i'j}$ ($e_{i'},e_j\in{\rm Ker}(B)$) and $A\not\in {\mathcal C}_{jj'}$ ($e_j,e_{j'}\in {\rm Ker}(A)$),
one of these complementary subsets is ${\mathcal C}_{ii'}$ and the other is ${\mathcal C}_{ij'}$.
These complementary subsets are adjacent.

Therefore, if  $A,B\in {\mathcal A}$ are orthogonal, then $f(A),f(B)$ are orthogonal or 
$${\mathcal C}\ne \lambda {\mathcal P}_{k}(H)\;\mbox{ and }\;\overline{{\rm R}(f(A))}=\overline{{\rm R}(f(B))}.$$
To show that the second possibility is not realized we consider subfamilies in ${\frak C}_{\mathcal A}(A,B)$
maximal with respect to the property that any two distinct elements are adjacent;
such subfamilies are said to be  {\it maximal cliques}.
Note that ${\frak X}\subset {\frak C}_{\mathcal A}(A,B)$ is a maximal clique 
if and only if $f({\frak X})$ is a maximal clique in ${\frak C}_{f({\mathcal A})}(f(A),f(B))$.

If $A,B\in {\mathcal A}$ are orthogonal, then for every maximal clique ${\frak X}\subset {\frak C}_{\mathcal A}(A,B)$
there is $i\in I$ such that one of the following possibilities is realized:
\begin{enumerate}
\item[$\bullet$] $e_{i}\in \overline{{\rm R}(A)}$ and 
${\frak X}$ is formed by all ${\mathcal C}_{ij}$ with $e_{j}\in \overline{{\rm R}(B)}$;
\item[$\bullet$] $e_{i}\in \overline{{\rm R}(B)}$ and 
${\frak X}$ is formed by all ${\mathcal C}_{ij}$ with $e_{j}\in \overline{{\rm R}(A)}$.
\end{enumerate}
In this case, the intersection of two distinct maximal cliques is empty or 
it consists of one element.

Suppose that ${\mathcal C}\ne \lambda {\mathcal P}_{k}(H)$ and $\overline{{\rm R}(A)}=\overline{{\rm R}(B)}$
(the case (3)). 
Lemma \ref{lemma-same-im} states that 
${\frak C}_{\mathcal A}(A,B)$  contains a certain ${\mathcal C}_{ij}$ such that
$e_i,e_j\in \overline{{\rm R}(A)}=\overline{{\rm R}(B)}$.
We take any $e_l\in{\rm Ker}(A)={\rm Ker}(B)$. 
By Lemma \ref{lemma-same-im}, ${\mathcal C}_{il}$ and ${\mathcal C}_{jl}$ belong to ${\frak C}_{\mathcal A}(A,B)$.
Hence
$${\frak X}=\{{\mathcal C}_{ij},{\mathcal C}_{il}, {\mathcal C}_{jl}\}$$
is a maximal clique in ${\frak C}_{\mathcal A}(A,B)$. 
Consider  the subfamily ${\frak Y}$ formed by all ${\mathcal C}_{is}$ contained in ${\frak C}_{\mathcal A}(A,B)$.
It contains at least $3$ elements
(since ${\mathcal C}_{is}\in {\frak C}_{\mathcal A}(A,B)$ for every $e_s \in {\rm Ker}(A)={\rm Ker}(B)$ and the dimension of the kernel is not less than $3$).
Therefore, ${\frak Y}$ is not contained in ${\frak X}$ and, consequently, ${\frak Y}$ is a maximal clique.
We have
$${\frak X}\cap {\frak Y}=\{{\mathcal C}_{ij},{\mathcal C}_{il}\},$$
i.e. there are two maximal cliques whose intersection contains more than one element.

The above arguments show that the restriction of $f$ to ${\mathcal A}$ is orthogonality preserving.
Since for any two orthogonal operators from ${\mathcal C}$ there is an orthogonal apartment containing them,
$f$ is orthogonality preserving.
Applying the same reasonings  to $f^{-1}$ we establish that $f$ is orthogonality preserving in both directions.
\end{proof}

Suppose that ${\mathcal C}={\mathcal P}_k(H)$.
If $\dim H>2k$, then Theorem \ref{theorem-1} is a direct consequence of Proposition \ref{prop-ortho} and the following  result.

\begin{theorem}[\cite{Gyory,Semrl}, see also \cite{GS}]\label{theoremGS}
If $\dim H>2k$, then every bijective transformation of ${\mathcal P}_k(H)$ preserving the orthogonality in both directions 
is induced by a unitary or anti-unitary operator on $H$.
\end{theorem}

If $\dim H=n <2k$ and $f$ is a bijective transformation of ${\mathcal P}_{k}(H)$ preserving the commutativity in both directions,
then the bijective transformation of ${\mathcal P}_{n-k}(H)$ sending every $P\in {\mathcal P}_{n-k}(H)$ to ${\rm Id}-f({\rm Id}-P)$
preserves the commutativity in both directions and, consequently, 
it is induced by a unitary or anti-unitary operator.
This operator also induces $f$.

If $\dim H=2k$, then Theorem \ref{theoremGS} fails and 
bijective transformations of ${\mathcal P}_k(H)$ preserving the commutativity in both directions are described only for $k\ge 4$
\cite[Theorem 3]{Pankov1}.
For $k=2,3$ the problem is still open. 
Some results related to the case $k=2$ and formulated in other terms are obtained in \cite{Havlicek}.
We do not see an easy way to apply them to our problem.

Theorem \ref{theorem-1} is proved in \cite{Pankov1} under the assumption that $k\ne 2,4$ if $\dim H=6$.
Theorem \ref{theorem-2} is known for the case when ${\mathcal C}$ is formed by operators of finite rank $k$ and $\dim H\ge 4k$ \cite{Pankov2}.
In \cite{Pankov1,Pankov2}, the author managed to prove Proposition \ref{prop-ortho}  for some special cases only;
the main reasoning was a characterizing of orthogonal pairs $A,B\in {\mathcal A}$ by the number of elements in ${\frak C}_{\mathcal A}(A,B)$. 
In the present paper, we use more subtle arguments.

\section{Proof of Theorem \ref{theorem-2}}

As above, we suppose that ${\mathcal C}$ is a conjugacy class of compact self-adjoint operators on $H$ distinct from 
$\lambda {\mathcal P}_{1}(H)$ and $f$ is a bijective transformation of ${\mathcal C}$
preserving the commutativity in both directions.
By the assumptions of Theorem \ref{theorem-2}, we have the following three possibilities:
\begin{enumerate}
\item[(A)] the ranges and the kernels of operators from ${\mathcal C}$ are infinite-dimensional;
\item[(B)] ${\mathcal C}$ is formed by operators of finite rank $k\ge 2$ and $\dim H>2k$;
\item[(C)] ${\mathcal C}$ is formed by operators of finite rank $k\ge 4$ and $\dim H=2k$.
\end{enumerate}

Consider  the action of the unitary group ${\rm U}(H)$ on the lattice of closed subspaces of $H$. 
The Grassmannian ${\mathcal G}_{k}(H)$ formed by $k$-dimensional subspaces of $H$ is an orbit of this action.
If $H$ is infinite-dimensional, 
then  the set  of all closed subspaces of $H$ whose dimension and codimension both are infinite is denoted by ${\mathcal G}_{\infty}(H)$.
Let ${\mathcal G}_{\aleph_{0}}(H)$ be the set formed by all separable $X\in {\mathcal G}_{\infty}(H)$.
Then ${\mathcal G}_{\aleph_{0}}(H)$ is an orbit of the action.
Note that ${\mathcal G}_{\aleph_{0}}(H)$ coincides with ${\mathcal G}_{\infty}(H)$ only in the case when $H$ is separable.
 
Since ${\mathcal G}_{k}(H)$ is identified with ${\mathcal P}_{k}(H)$,
Theorem \ref{theoremGS} states that  every bijective transformation of ${\mathcal G}_k(H)$ preserving the orthogonality 
in both directions is induced by a unitary or anti-unitary operator if $\dim H>2k$.
The same holds for ${\mathcal G}_{\infty}(H)$ \cite{Semrl} and ${\mathcal G}_{\aleph_{0}}(H)$ 
(Corollary \ref{cor-ortho} in the Appendix).

Let ${\mathcal G}_{\mathcal C}$ be the set formed by the closures of the ranges of all operators from ${\mathcal C}$.
Then ${\mathcal G}_{\mathcal C}={\mathcal G}_{k}(H)$ if ${\mathcal C}$ consists of rank-$k$ operators 
and ${\mathcal G}_{\mathcal C}={\mathcal G}_{\aleph_{0}}(H)$ if the ranges of operators from ${\mathcal C}$ are infinite-dimensional.

For every subset ${\mathcal X}\subset {\mathcal C}$ denote by ${\mathcal X}^{\perp}$
the set of all operators from ${\mathcal C}$ orthogonal to all operators from ${\mathcal X}$.
In particular, for $A\in {\mathcal C}$ the set $A^{\perp}$ consists of all operators from ${\mathcal C}$
orthogonal to $A$ and an easy verification shows that $A^{\perp\perp}$ is formed by all operators 
$B\in {\mathcal C}$ satisfying $\overline{{\rm R}(B)}= \overline{{\rm R}(A)}$.
By Proposition \ref{prop-ortho}, $f$ is orthogonality preserving in both directions and
$$f(A^{\perp\perp})=f(A)^{\perp\perp}$$
for every $A\in {\mathcal C}$.
Therefore, $f$ induces a bijective transformation $f'$ of ${\mathcal G}_{\mathcal C}$ 
which preserves the orthogonality in both directions. 

(I). 
In the cases (A) and (B),
Theorem \ref{theoremGS} and Corollary \ref{cor-ortho} (from  the Appendix) show that
$f'$ is induced by a unitary or anti-unitary operator $U$. Then
$${\overline{{\rm R}(f(A))}}=U\left ({\overline{{\rm R}(A)}}\right )$$
for every $A\in {\mathcal C}$.
Consider the bijective transformation $g$ of ${\mathcal C}$ such that
\begin{equation}\label{eq1}
g(A)=U^{*}f(A)U
\end{equation}
for all $A\in {\mathcal C}$.
It is commutativity preserving in both directions and 
$${\overline{{\rm R}(g(A))}}={\overline{{\rm R}(A)}}$$
for every $A\in {\mathcal C}$.

Let $A\in {\mathcal C}$ and let $x$ be a non-zero eigenvector of $A$ corresponding to a non-zero eigenvalue.
There is an operator $B\in {\mathcal C}$ commuting with $A$ and such that 
${\overline{{\rm R}(A)}}\cap{\overline{{\rm R}(B)}}$ is the $1$-dimensional subspace containing $x$.
The operators $g(A),g(B)$ commute and 
$${\overline{{\rm R}(g(A))}}\cap{\overline{{\rm R}(g(B))}}$$ 
is the $1$-dimensional subspace containing $x$. This means that 
$x$ is an eigenvector of $g(A)$ corresponding to a non-zero eigenvalue.
Similarly, we establish that every eigenvector of $g(A)$ corresponding to a non-zero eigenvalue 
is an eigenvector of $A$ corresponding to a non-zero eigenvalue.
Therefore, $x\in {\overline{{\rm R}(A)}}={\overline{{\rm R}(g(A))}}$ is an eigenvector of $A$
if and only if it is an eigenvector of $g(A)$.
This is possible only in the case when $A$ and $g(A)$ have the same eigenspaces,
i.e. $g(A)=\delta_{A}(A)$ for a certain $\delta_{A}\in {\mathcal C}$.
We obtain that
$$f(A)=U\delta_{A}(A)U^{*}$$
for every $A\in {\mathcal C}$.

(II).
Suppose that ${\mathcal C}$ is formed by operators of finite rank $k\ge 4$ and $\dim H=2k$.
The statement is proved for ${\mathcal C}={\mathcal P}_{k}(H)$ \cite[Theorem 3]{Pankov1}.
In the case when ${\mathcal C}=\lambda{\mathcal P}_{k}(H)$ and $\lambda\ne 1$, 
it is sufficient to consider the transformation of ${\mathcal P}_k(H)$ which sends every $P\in {\mathcal P}_k(H)$ to $\lambda^{-1}f(\lambda P)$.
From this moment, we assume that ${\mathcal C}\ne \lambda{\mathcal P}_{k}(H)$.

Observe that $f'$ (the bijective transformation of ${\mathcal G}_{\mathcal C}={\mathcal G}_{k}(H)$ induced by $f$) is compatibility preserving in both directions. 
Indeed, for any compatible $k$-dimensional subspaces $X,Y\subset H$
there are commutative operators $A,B\in {\mathcal C}$ such that 
$${\rm R}(A)=X\;\mbox{ and }\;{\rm R}(B)=Y.$$
Since $f(A),f(B)$ commute,
$${\rm R}(f(A))=f'(X)\;\mbox{ and }\;{\rm R}(f(B))=f'(Y)$$
are compatible. Applying the same arguments to $f^{-1}$,
we establish that $f'$ is compatibility preserving in both directions.  

Since $f'$ can be considered as a bijective transformation of ${\mathcal P}_k(H)$ preserving the commutativity in both directions,
there is a unitary or anti-unitary operator $U$
such that for every $k$-dimensional subspace $X\subset H$  we have either
$$f'(X)=U(X)\;\mbox{ or }\;f'(X)=U(X)^{\perp}.$$
Then for every $A\in {\mathcal C}$  we have either
$${\rm R}(f(A))=U({\rm R}(A))\;\mbox{ or }\;{\rm R}(f(A))=U({\rm R}(A))^{\perp}.$$
As above, we consider
the transformation $g$ of ${\mathcal C}$ satisfying \eqref{eq1} for all $A\in {\mathcal C}$.
It is commutativity preserving in both directions and for every $A\in {\mathcal C}$  we have either
$${\rm R}(g(A))={\rm R}(A)\;\mbox{ or }\;{\rm R}(g(A))={\rm R}(A)^{\perp}.$$
We need to show that the second possibility is not realized.

Recall that 
for an orthogonal apartment ${\mathcal A}\subset {\mathcal C}$ and operators $A,B\in {\mathcal A}$
the family ${\frak C}_{{\mathcal A}}(A,B)$ consists of all complementary subsets 
${\mathcal C}_{ij}\subset {\mathcal A}$ containing both $A,B$.
Two distinct ${\mathcal C}_{ij},{\mathcal C}_{i'j'}$ are adjacent if $\{i,j\}\cap \{i',j'\}\ne \emptyset$
and ${\frak X}\subset {\frak C}_{{\mathcal A}}(A,B)$ is called a maximal clique if 
it is maximal with respect to the property that any two distinct elements are adjacent.

\begin{lemma}\label{lemma-char-ad}
Let ${\mathcal A}$ be an orthogonal apartment of ${\mathcal C}$
and let $A,B\in {\mathcal A}$. Then the following assertions are fulfilled:
\begin{enumerate}
\item[$\bullet$] If ${\rm R}(A)\cap {\rm R}(B)$ is $(k-1)$-dimensional, 
then there are $A',B'\in {\mathcal A}$ such that
\begin{equation}\label{eq2}
{\rm R}(A)={\rm R}(A'),\;\;\;{\rm R}(B)={\rm R}(B')
\end{equation}
and ${\frak C}_{{\mathcal A}}(A',B')$ contains at least two maximal cliques containing not less than $k+1$ elements.
\item[$\bullet$] If ${\rm R}(A)\cap {\rm R}(B)$ is $1$-dimensional, 
then for any $A',B'\in {\mathcal A}$ satisfying \eqref{eq2}
there is at most one maximal clique in ${\frak C}_{{\mathcal A}}(A',B')$
which contains not less than $k+1$ elements.
\end{enumerate}
\end{lemma}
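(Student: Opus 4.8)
The plan is to pin down which special subfamilies of $\mathfrak{C}_{\mathcal A}(A',B')$ can be large, and then read off both assertions from a short count of eigenspace-index sets. For an operator $C\in\mathcal A$ and an index $i\in I$, identify each eigenspace of $C$ with the set of indices of its basis vectors from $\mathcal B$, and write $I_i(C)$ for the eigenspace of $C$ containing $e_i$; thus $\mathcal C_{ij}\in\mathfrak{C}_{\mathcal A}(A',B')$ precisely when $j\notin I_i(A')\cup I_i(B')$. Two routine ``sunflower'' facts are needed. First, for each $i$ the star $\mathfrak S_i:=\{\mathcal C_{ij}:j\notin I_i(A')\cup I_i(B')\}$ consists of pairwise adjacent orthocomplementary subsets and, whenever $|\mathfrak S_i|\ge 3$, is a special subfamily of cardinality $2k-|I_i(A')\cup I_i(B')|$. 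Second, any four pairwise adjacent orthocomplementary subsets of $\mathcal A$ have a common index, so every special subfamily with at least four elements equals some $\mathfrak S_i$. Since $k\ge 4$ we have $k+1\ge 5$, so a special subfamily with at least $k+1$ elements is one of the $\mathfrak S_i$ with $|I_i(A')\cup I_i(B')|\le k-1$; as distinct centres give distinct stars, the number of such ``big'' special subfamilies is exactly $\#\{i\in I:|I_i(A')\cup I_i(B')|\le k-1\}$. I will also use that, as $\mathcal C\ne\lambda\mathcal P_k(H)$ is formed by rank-$k$ operators, every operator in $\mathcal C$ has at least two nonzero eigenspaces, so each nonzero eigenspace has dimension at most $k-1$ while the kernel has dimension exactly $k$.

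For the first assertion write $X=\mathrm R(A)$, $Y=\mathrm R(B)$ with $\dim(X\cap Y)=k-1$. As $A,B\in\mathcal A$, the subspaces $X,Y$ are $\mathcal B$-compatible, giving a partition $I=L\cup\{m,n\}\cup Q$ with $|L|=|Q|=k-1$, $m\ne n$, $X=\langle e_i:i\in L\cup\{m\}\rangle$, $Y=\langle e_i:i\in L\cup\{n\}\rangle$; then $\mathrm{Ker}(A')=\langle e_i:i\in Q\cup\{n\}\rangle$ and $\mathrm{Ker}(B')=\langle e_i:i\in Q\cup\{m\}\rangle$ for every $A',B'\in\mathcal A$ with $\mathrm R(A')=X$, $\mathrm R(B')=Y$. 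I will construct such $A',B'$ with at least two indices $i\in L$ for which $I_i(A')=I_i(B')$ is a nonzero eigenspace: if some nonzero eigenvalue $\alpha_s$ has multiplicity $n_s\ge 2$, pick $Z\subseteq L$ with $|Z|=n_s$ (possible, as $n_s\le k-1=|L|$), make $Z$ the $\alpha_s$-eigenspace of both $A'$ and $B'$, and distribute the remaining nonzero eigenvalues over $(L\cup\{m\})\setminus Z$, respectively $(L\cup\{n\})\setminus Z$; every $i\in Z$ then has $|I_i(A')\cup I_i(B')|=n_s\le k-1$. If instead all nonzero eigenvalues are simple (so there are $k$ of them), fix distinct $i_1,i_2\in L$, let in both $A'$ and $B'$ two fixed eigenvalues have eigenspaces $\langle e_{i_1}\rangle$ and $\langle e_{i_2}\rangle$, and assign the remaining eigenvalues arbitrarily to the remaining basis vectors of the range; then $I_{i_\ell}(A')\cup I_{i_\ell}(B')=\{i_\ell\}$ for $\ell=1,2$. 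In either case $A',B'\in\mathcal C$ and $\mathfrak{C}_{\mathcal A}(A',B')$ has at least two big special subfamilies.

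For the second assertion write $X=\mathrm R(A)$, $Y=\mathrm R(B)$ with $\dim(X\cap Y)=1$. Compatibility with $\mathcal B$ now gives $I=\{p,q\}\cup M'\cup N'$ with $|M'|=|N'|=k-1$, $X\cap Y=\langle e_p\rangle$, $X=\langle e_p\rangle\oplus\langle e_i:i\in M'\rangle$, $Y=\langle e_p\rangle\oplus\langle e_i:i\in N'\rangle$ and $(X+Y)^\perp=\langle e_q\rangle$; hence for every $A',B'\in\mathcal A$ with $\mathrm R(A')=X$, $\mathrm R(B')=Y$ one has $\mathrm{Ker}(A')=\langle e_i:i\in\{q\}\cup N'\rangle$ and $\mathrm{Ker}(B')=\langle e_i:i\in\{q\}\cup M'\rangle$. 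Now $|I_i(A')\cup I_i(B')|\ge k$ whenever $i\ne p$: if $i\in N'$ then $e_i\in\mathrm{Ker}(A')$, so $I_i(A')=\{q\}\cup N'$ has $k$ elements; if $i\in M'$ then symmetrically $I_i(B')=\{q\}\cup M'$ has $k$ elements; and $I_q(A')\cup I_q(B')=\{q\}\cup M'\cup N'$ has $2k-1$ elements. Thus $i=p$ is the only index for which $|I_i(A')\cup I_i(B')|\le k-1$ is possible, so $\mathfrak{C}_{\mathcal A}(A',B')$ has at most one big special subfamily.

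The delicate step is the first assertion: one has to actually manufacture two large stars with the prescribed pair of ranges, and the argument genuinely splits into the two regimes above — a nonzero eigenvalue of multiplicity $\ge 2$, where both stars are centred inside a single common eigenspace, versus all nonzero eigenvalues simple, where the two stars are centred in two distinct common one-dimensional eigenspaces. Verifying that the operators so built really lie in $\mathcal C$ (right multiplicity profile, kernel of dimension $k$) and that the resulting stars have at least $k+1$ elements is precisely where the standing hypotheses $\dim H=2k$, $k\ge 4$, $\mathcal C\ne\lambda\mathcal P_k(H)$ are used.
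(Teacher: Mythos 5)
Your proof is correct and follows essentially the same route as the paper: special subfamilies are classified into stars and three-element triangles (the latter discarded since $k\ge 4$), large special subfamilies are exactly the stars $\mathfrak S_i$ with $|I_i(A')\cup I_i(B')|\le k-1$, and the first bullet splits into the same two cases (a non-zero eigenvalue of multiplicity $\ge 2$ versus all non-zero eigenvalues simple) as in the paper. Your index-set bookkeeping is somewhat more explicit than the paper's, particularly in verifying the second bullet for the index $q$ spanning $\mathrm{Ker}(A)\cap\mathrm{Ker}(B)$, but the underlying argument is identical.
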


\begin{proof}
Let $\{e_i\}^{2k}_{i=1}$ be an orthonormal basis associated to ${\mathcal A}$.
There are precisely two types of maximal cliques:
$\{{\mathcal C}_{ij}\}$ with fixed $i$ and $\{{\mathcal C}_{ij},{\mathcal C}_{it},{\mathcal C}_{jt}\}$.
Cliques of the second type contain precisely $3$ elements and we do not take them into account,
since $k+1\ge 5$.

Suppose that ${\rm R}(A)\cap {\rm R}(B)$ is $(k-1)$-dimensional.
If operators from ${\mathcal C}$ have eigenspaces of dimension not less than $2$ corresponding to non-zero eigenvalues,
then we choose $A',B'\in {\mathcal A}$ satisfying \eqref{eq2} and having a common eigenspace $X$ such that $\dim X\ge 2$.
Note that $\dim X\le k-1$ (since every operator from ${\mathcal C}$ has at least two eigenspaces corresponding to non-zero eigenvalues). 
Let us take any
$$e_i \in X\subset {\rm R}(A')\cap {\rm R}(B')={\rm R}(A)\cap {\rm R}(B).$$
If $e_j\not\in X$, then ${\mathcal C}_{ij}$ belongs to ${\frak C}_{{\mathcal A}}(A',B')$.
There are at least $k+1$ such ${\mathcal C}_{ij}$ 
(since $\dim H=2k$ and $\dim X\le k-1$).
The subspace $X$ contains at least two distinct $e_i$ ($\dim X\ge 2$).
Hence there are at least two maximal cliques in ${\frak C}_{{\mathcal A}}(A',B')$
which contain not less than $k+1$ elements.

If all eigenspaces of operators from ${\mathcal C}$ corresponding to non-zero eigenvalues are $1$-dimensional,
then for every $e_i\in {\rm R}(A)\cap {\rm R}(B)$ 
all ${\mathcal C}_{ij}$  belong to  ${\frak C}_{{\mathcal A}}(A,B)$.
The dimension of ${\rm R}(A)\cap {\rm R}(B)$ is equal to $k-1$ and
there are at least  $k-1(\ge 3)$ maximal cliques in ${\frak C}_{{\mathcal A}}(A,B)$
containing not less than $2k-1(\ge k+1)$ elements.

Consider the case when ${\rm R}(A)\cap {\rm R}(B)$ is $1$-dimensional. 
For every ${\mathcal C}_{ij}\in{\frak C}_{{\mathcal A}}(A,B)$ one of the following possibilities is realized:
\begin{enumerate}
\item[$\bullet$] one of $e_i,e_j$ belongs to ${\rm R}(A)\cap {\rm R}(B)$;
\item[$\bullet$] one of $e_i,e_j$ belongs to ${\rm R}(A)\setminus{\rm R}(B)$ and 
the other to ${\rm R}(B)\setminus{\rm R}(A)$.
\end{enumerate}
Indeed, if the $1$-dimensional subspace ${\rm R}(A)\cap {\rm R}(B)$ contains $e_t$ with $t\ne i,j$ and 
each of $e_i,e_j$ does not belong to ${\rm R}(A)\setminus{\rm R}(B)$, then $e_i,e_j\in {\rm Ker}(A)$ which contradicts the assumption that 
${\mathcal C}_{ij}\in{\frak C}_{{\mathcal A}}(A,B)$.

If $e_i\in {\rm R}(A)\setminus{\rm R}(B)$, then there are at most $k$ distinct $j$ such that
${\mathcal C}_{ij}\in{\frak C}_{{\mathcal A}}(A,B)$
(we have precisely $k-1$ distinct $e_j\in {\rm R}(B)\setminus{\rm R}(A)$ and unique $e_j\in {\rm R}(A)\cap {\rm R}(B)$).
The same holds for $e_i\in {\rm R}(B)\setminus{\rm R}(A)$.
Therefore, if $e_i\in {\rm R}(A)\cap {\rm R}(B)$ (such $e_i$ is unique) and
$$\{{\mathcal C}_{ij}\}\cap {\frak C}_{{\mathcal A}}(A,B)$$
is a maximal clique in ${\frak C}_{{\mathcal A}}(A,B)$,  
then this is a unique maximal clique which can contain more than $k$ elements.
\end{proof}

Recall that for every $A\in {\mathcal C}$  we have either
$${\rm R}(g(A))={\rm R}(A)\;\mbox{ or }\;{\rm R}(g(A))={\rm R}(A)^{\perp}.$$
Now, we prove the following.

\begin{lemma}\label{lemma-alter}
If $A,B\in {\mathcal C}$ commute, then one of the following possibilities is realized:
\begin{enumerate}
\item[$\bullet$] the ranges of $g(A)$ and $g(B)$ coincide with the ranges of $A$ and $B$, respectively;
\item[$\bullet$] the ranges of $g(A)$ and $g(B)$ are the orthogonal complements of 
the ranges of $A$ and $B$, respectively.
\end{enumerate}
\end{lemma}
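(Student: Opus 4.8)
The plan is to attach to every $A\in{\mathcal C}$ a sign $\epsilon(A)\in\{+,-\}$, putting $\epsilon(A)=+$ when ${\rm R}(g(A))={\rm R}(A)$ and $\epsilon(A)=-$ when ${\rm R}(g(A))={\rm R}(A)^{\perp}$; since ${\rm R}(A)$ and ${\rm R}(A)^{\perp}$ intersect in $0$ and are therefore distinct, $\epsilon$ is well defined, and Lemma \ref{lemma-alter} asserts precisely that $\epsilon(A)=\epsilon(B)$ whenever $A$ and $B$ commute. Two facts will be used repeatedly. \emph{(i)} The sign $\epsilon$ is constant on each set of operators of ${\mathcal C}$ with a prescribed range: if ${\rm R}(A')={\rm R}(A)$ then $A'^{\perp\perp}=A^{\perp\perp}$, hence $g(A')^{\perp\perp}=g(A)^{\perp\perp}$ because $g$ is orthogonality preserving in both directions (Lemma \ref{lemma-ortho-pres1}), whence ${\rm R}(g(A'))={\rm R}(g(A))$ and $\epsilon(A')=\epsilon(A)$. \emph{(ii)} Since $g$ is commutativity preserving in both directions, the lemmas of Section 3 hold with $g$ in place of $f$; moreover $g({\mathcal A})={\mathcal A}$ for every orthogonal apartment ${\mathcal A}$ (as already observed), and on such an ${\mathcal A}$ the map $g$ sends orthocomplementary subsets to orthocomplementary subsets and respects adjacency, so it carries ${\frak C}_{\mathcal A}(A,B)$ onto ${\frak C}_{\mathcal A}(g(A),g(B))$ and special subfamilies onto special subfamilies of the same cardinality.

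The first step is to reduce to the case $\dim\bigl({\rm R}(A)\cap{\rm R}(B)\bigr)=k-1$. Given commuting $A,B\in{\mathcal C}$, fix an orthogonal apartment ${\mathcal A}$ containing both, associated to an orthonormal basis $\{e_i\}_{i=1}^{2k}$; then ${\rm R}(A)$ and ${\rm R}(B)$ are spanned by $k$-element subsets $S_A,S_B$ of $\{1,\dots,2k\}$. Pass from $S_A$ to $S_B$ through a chain $S_A=S_0,S_1,\dots,S_n=S_B$ of $k$-element subsets with $|S_t\cap S_{t+1}|=k-1$ for every $t$ (at each step remove one element and adjoin another), and for $0<t<n$ choose an operator $A_t\in{\mathcal A}$ whose range is the span of $\{e_i:i\in S_t\}$: distribute these $k$ basis vectors among the eigenspaces corresponding to the nonzero eigenvalues, whose dimensions sum to $k$, and let the remaining $k$ basis vectors span the kernel, of dimension $k=n_0$. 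With $A_0=A$ and $A_n=B$, all the $A_t$ lie in the single apartment ${\mathcal A}$, so consecutive ones commute and have ranges meeting in dimension $k-1$; once the assertion is known in that case we obtain $\epsilon(A)=\epsilon(A_0)=\dots=\epsilon(A_n)=\epsilon(B)$. (If ${\rm R}(A)={\rm R}(B)$ the chain is trivial and the conclusion is immediate from \emph{(i)}.)

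For the case $\dim({\rm R}(A)\cap{\rm R}(B))=k-1$ I argue by contradiction: suppose $\epsilon(A)\neq\epsilon(B)$ and, by symmetry, that $\epsilon(A)=+$, $\epsilon(B)=-$, and fix an orthogonal apartment ${\mathcal A}$ containing $A$ and $B$. As ${\rm R}(A)$ and ${\rm R}(B)$ are compatible $k$-dimensional subspaces of the $2k$-dimensional space $H$, we have $\dim\bigl({\rm R}(A)\cap{\rm R}(B)^{\perp}\bigr)=\dim{\rm R}(A)-\dim({\rm R}(A)\cap{\rm R}(B))=k-(k-1)=1$. Apply the first bullet of Lemma \ref{lemma-char-ad} to $A,B$: there are $A',B'\in{\mathcal A}$ with ${\rm R}(A')={\rm R}(A)$, ${\rm R}(B')={\rm R}(B)$ such that ${\frak C}_{\mathcal A}(A',B')$ contains at least two special subfamilies of cardinality $\ge k+1$. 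By \emph{(i)}, $\epsilon(A')=+$ and $\epsilon(B')=-$, so ${\rm R}(g(A'))={\rm R}(A)$ and ${\rm R}(g(B'))={\rm R}(B)^{\perp}$; by \emph{(ii)}, ${\frak C}_{\mathcal A}(g(A'),g(B'))$ again contains at least two special subfamilies of cardinality $\ge k+1$. But $g(A'),g(B')\in{\mathcal A}$ have ranges meeting in a $1$-dimensional subspace, so the second bullet of Lemma \ref{lemma-char-ad}, applied to any pair of operators of ${\mathcal A}$ with these two ranges, allows at most one special subfamily of cardinality $\ge k+1$ in ${\frak C}_{\mathcal A}(g(A'),g(B'))$ — a contradiction. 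Hence $\epsilon(A)=\epsilon(B)$, and together with the reduction this proves the lemma.

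I expect Step 2 to be the delicate point: Lemma \ref{lemma-char-ad} hands us representatives $A',B'$ over which we have no explicit control, and the argument only closes because fact \emph{(i)} pins down the ${\mathcal G}_k(H)$-positions of $g(A')$ and $g(B')$ as exactly ${\rm R}(A)$ and ${\rm R}(B)^{\perp}$, so that the two bullets of Lemma \ref{lemma-char-ad} are being played off against each other on the same apartment, with intersection dimensions $k-1$ and $1$ and incompatible conclusions. The connectivity argument of Step 1 is routine but essential, since Lemma \ref{lemma-char-ad} only separates intersection dimensions $1$ and $k-1$, whereas the ranges of two commuting operators of ${\mathcal C}$ may meet in any dimension from $0$ to $k$.
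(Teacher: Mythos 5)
Your proposal is correct and follows essentially the same route as the paper: reduce via a chain of operators in one apartment whose consecutive ranges meet in dimension $k-1$, and in that case rule out a sign mismatch by playing the two bullets of Lemma \ref{lemma-char-ad} against each other through the $g$-invariance of special subfamilies. Your write-up actually makes explicit two points the paper leaves implicit (that the sign depends only on the range, via $A^{\perp\perp}$, and that one must pass to the representatives $A',B'$ supplied by the first bullet), which is a faithful and careful filling-in rather than a different argument.
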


\begin{proof}
Let ${\mathcal A}$ be an orthogonal apartment containing $A$ and $B$. 
Then 
$$g({\frak C}_{\mathcal A}(A,B))={\frak C}_{g({\mathcal A})}(g(A),g(B))$$
and ${\frak X}\subset {\frak C}_{\mathcal A}(A,B)$ is a maximal clique if and only if 
$g({\frak X})$ is a maximal clique in ${\frak C}_{g({\mathcal A})}(g(A),g(B))$.

In the general case, there is a sequence
$$A=A_{0},A_1,\dots, A_m=B$$
formed by operators from ${\mathcal A}$ such that ${\rm R}(A_{i-1})\cap {\rm R}(A_{i})$ is $(k-1)$-dimensional for every
$i\in \{1,\dots,m\}$. Hence it is sufficient to prove the statement in the case when 
${\rm R}(A)\cap {\rm R}(B)$ is $(k-1)$-dimensional.

If ${\rm R}(g(A))={\rm R}(A)$ and ${\rm R}(g(B))={\rm R}(B)^{\perp}$, then 
$${\rm R}(g(A))\cap{\rm R}(g(B))={\rm R}(A)\cap {\rm R}(B)^{\perp}$$
is $1$-dimensional which is impossible by Lemma \ref{lemma-char-ad}. 
The case when ${\rm R}(g(A))={\rm R}(A)^{\perp}$ and ${\rm R}(g(B))={\rm R}(B)$ is similar.
\end{proof}

Suppose that there is $A\in {\mathcal C}$ such that ${\rm R}(g(A))={\rm R}(A)^{\perp}$.
By Lemma \ref{lemma-alter}, we have ${\rm R}(g(B))={\rm R}(B)^{\perp}$ for every $B\in {\mathcal C}$ commuting with $A$.
Let $x\in {\rm R}(A)$ be a non-zero eigenvector of $A$. 
We take any non-zero vector $y\in {\rm R}(A)^{\perp}$ and consider 
the $k$-dimensional subspace $X$ spanned by $x$ and the orthogonal complement of $y$ in ${\rm R}(A)^{\perp}$.
The subspaces $X$ and ${\rm R}(A)$ are compatible,
their intersection is the $1$-dimensional subspace containing $x$
and there is an operator $B\in {\mathcal C}$ commuting with $A$ and such that ${\rm R}(B)=X$.
The operators $g(A),g(B)$ commute and 
$${\rm R}(g(A))\cap {\rm R}(g(B))= {\rm R}(A)^{\perp}\cap X^{\perp}$$
is the $1$-dimensional subspace containing $y$. 
This implies that $y$ is an eigenvector of $g(A)$ corresponding to a non-zero eigenvalue. 
Therefore, every non-zero vector of ${\rm R}(g(A))={\rm R}(A)^{\perp}$ is an eigenvector of $g(A)$.
This is possible only in the case when ${\mathcal C}=\lambda {\mathcal P}_{k}(H)$.

So, ${\rm R}(g(A))={\rm R}(A)$ for every $A\in {\mathcal A}$.
As above, we establish that $g(A)=\delta_A(A)$ for a certain $\delta_A\in S({\mathcal C})$ which gives the claim.

\section*{Appendix: Orthogonality and order preserving transformations of ${\mathcal G}_{\aleph_0}(H)$}
Suppose that $H$ is infinite-dimensional.
The Grassmannian ${\mathcal G}_{\aleph_{0}}(H)$ is partially ordered by inclusions.

\begin{prop}\label{prop-order}
Every bijective transformation $h$ of ${\mathcal G}_{\aleph_0}(H)$ preserving the order in both directions, i.e.
$$X\subset Y\;\Longleftrightarrow\;h(X)\subset h(Y)$$
for $X,Y\in{\mathcal G}_{\aleph_0}(H)$,
is induced by an invertible linear or conjugate-linear operator on $H$
whose restriction to every element of ${\mathcal G}_{\aleph_0}(H)$ is bounded.
\end{prop}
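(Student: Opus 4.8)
The plan is to realize $h$ as a collineation of an infinite-dimensional projective geometry and then invoke the Fundamental Theorem of Projective Geometry. First I would recover the rank-one subspaces intrinsically: a point (one-dimensional subspace) of $H$ that is \emph{not} contained in any element of ${\mathcal G}_{\aleph_0}(H)$ cannot be distinguished, but every element of ${\mathcal G}_{\aleph_0}(H)$ is itself separable and infinite-dimensional, so the one-dimensional subspaces it contains are exactly the order-theoretic atoms below it. More precisely, I would characterize, purely in terms of the partial order $(\mathcal G_{\aleph_0}(H),\subset)$, the pairs $(L,M)$ with $L$ "atom-like" and $M\in\mathcal G_{\aleph_0}(H)$, $L\subset M$: $L$ should be a minimal nonzero element among those below $M$, but since $\mathcal G_{\aleph_0}(H)$ has no minimal elements, I instead work with the lattice generated — the join of two elements, the meet, etc. — and identify one-dimensional subspaces as those $X$ such that for all $Y,Z\in\mathcal G_{\aleph_0}(H)$ with $X\subset Y$, $X\subset Z$ we have $X\subset Y\cap Z$ whenever $Y\cap Z\in \mathcal G_{\aleph_0}(H)$, together with a minimality clause. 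The cleanest route is: fix any $M_0\in\mathcal G_{\aleph_0}(H)$; then $\{X\in\mathcal G_{\aleph_0}(H): X\subset M_0\}$ with the induced order is order-isomorphic to the lattice of closed subspaces of the separable Hilbert space $M_0$ (after removing $0$ and $M_0$ themselves, or rather the lattice of \emph{all} closed subspaces, since a closed subspace of $M_0$ has infinite codimension in $H$), and $h$ restricts to an order isomorphism between two such intervals. By the classical theory (e.g. the lattice-theoretic form of the fundamental theorem, valid since $\dim M_0=\aleph_0\ge 3$), this restriction is induced by a semilinear isomorphism $M_0\to h(M_0)$ whose associated field automorphism of $\mathbb C$ is the identity or complex conjugation — here one uses that $\mathbb C$ has no order structure to kill wild automorphisms, but in fact semilinearity with respect to an arbitrary automorphism is what the fundamental theorem gives, and the further restriction to identity/conjugation is \emph{not} automatic; it will follow only at the end from boundedness, so I keep the automorphism $\sigma$ general for now.

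Next I would globalize. Choose an increasing chain $M_0\subset M_1\subset M_2\subset\cdots$ in $\mathcal G_{\aleph_0}(H)$ whose union is dense in $H$ (possible because $H$ need not be separable, but any countable increasing union of separable subspaces is separable, so I instead take a maximal chain, or better, for each pair of points $p,q$ pick some $M\in\mathcal G_{\aleph_0}(H)$ containing both and use that the semilinear maps on overlapping intervals agree up to scalar). The key compatibility step: for $M\subset M'$ in $\mathcal G_{\aleph_0}(H)$, the semilinear isomorphism $\ell_{M'}$ given on $M'$ restricts on $M$ to a semilinear isomorphism inducing the same collineation as $\ell_M$, hence $\ell_{M'}|_M = c_{M,M'}\,\ell_M$ for a scalar $c_{M,M'}\in\mathbb C^\times$ and the field automorphism is the same $\sigma$ throughout (it is detected already on any single line, and all our intervals share lines). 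A standard rescaling/cocycle argument — fix one point $p_0$ and one nonzero vector $v_0\in p_0$, and normalize each $\ell_M$ so that $\ell_M(v_0)$ is a \emph{fixed} vector $w_0\in h(p_0)$ — makes the $\ell_M$ agree on overlaps, so they glue to a single $\sigma$-semilinear bijection $L$ defined on the dense subspace $H_0=\bigcup_M M$ (the union over \emph{all} $M\in\mathcal G_{\aleph_0}(H)$, which is all of $H$ since every vector lies in some separable closed subspace of infinite codimension — here we use $\dim H=\infty$ so codimension-$\aleph_0$ subspaces containing a prescribed vector exist). Thus $L$ is a $\sigma$-semilinear bijection of $H$ with $h(X)=L(X)$ for every $X\in\mathcal G_{\aleph_0}(H)$.

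Finally I would extract boundedness and pin down $\sigma$. Since $h$ is a bijection of $\mathcal G_{\aleph_0}(H)$, for every closed separable $X$ of infinite codimension $L(X)$ is again closed; in particular $L$ maps closed subspaces to closed subspaces, and applying this to the restriction $L|_X\colon X\to L(X)$ between separable Hilbert spaces, a closed-graph type argument shows $L|_X$ is bounded: if $x_n\to x$ in $X$ and $L(x_n)\to y$, then on the (separable, infinite-codimensional) closed span of $\{x_n,x\}$ the map is a $\sigma$-semilinear bijection onto a closed subspace sending the closed line $\mathbb C x$ into the closed set $L(\overline{\mathrm{span}})$, forcing $y\in L(\mathbb C x)=\mathbb C^{\sigma}L(x)$, and a little more work gives $y=L(x)$; then the closed graph theorem applies since $X$ is complete. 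Boundedness of $L$ on every line forces $\sigma$ to be continuous on $\mathbb C$, hence $\sigma=\mathrm{id}$ or complex conjugation, so $L$ is linear or conjugate-linear; this is exactly the assertion. The main obstacle I anticipate is the globalization/gluing step: ensuring the locally-defined semilinear maps can be simultaneously normalized so that they literally agree on overlaps (not merely project to the same collineation), and handling the non-separable case where no single countable chain exhausts $H$ — this requires the cocycle argument to be run over the directed set of \emph{all} suitable subspaces, using that any two are contained in a third (two separable subspaces of infinite codimension span a separable subspace, which still has infinite codimension since $\dim H=\infty$).
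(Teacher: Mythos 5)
Your overall strategy (local semilinear maps on the intervals below elements of ${\mathcal G}_{\aleph_0}(H)$, glued via the Fundamental Theorem of Projective Geometry) is the same as the paper's, but your globalization step has a genuine gap. You justify the gluing by claiming that any two elements of ${\mathcal G}_{\aleph_0}(H)$ are contained in a common third element, ``since two separable subspaces of infinite codimension span a separable subspace, which still has infinite codimension since $\dim H=\infty$.'' This is false when $H$ is separable: take any closed $X$ with $\dim X=\mathrm{codim}\, X=\aleph_0$; then $X$ and $X^{\perp}$ both lie in ${\mathcal G}_{\aleph_0}(H)$ but no proper closed subspace contains both, so the family is not directed and your cocycle argument cannot be run. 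The proposition is stated for every infinite-dimensional $H$, so this case must be covered. More generally, even where directedness fails only locally, you never explain how to compare $\ell_M$ and $\ell_{M'}$ on a line $Q\subset M\cap M'$ when $M\cap M'$ is finite-dimensional, since then $M\cap M'$ contains no element of ${\mathcal G}_{\aleph_0}(H)$ on which the two maps are forced to agree. The paper resolves exactly this point by a different connectivity device: if $X\cap Y$ is infinite-dimensional it lies in ${\mathcal G}_{\aleph_0}(H)$ and one intersects two elements inside it to isolate $Q$; if $X\cap Y$ is finite-dimensional, one invokes \cite[Lemma 3.20]{Pankov-book} to produce a $Z$ with $X\cap Z$ and $Y\cap Z$ in ${\mathcal G}_{\aleph_0}(H)$ and containing $X\cap Y$, and passes through $Z$. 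You need this (or an equivalent) lemma; a common upper bound does not exist in general.

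Two smaller points. First, your appeal to ``the classical lattice-theoretic form of the fundamental theorem'' for the poset of infinite-dimensional closed subspaces of a fixed $M_0$ is not the classical FTPG (which concerns the full subspace lattice or the point-line geometry); recovering points from this truncated poset and producing a \emph{bounded} semilinear map is nontrivial and is precisely what \cite[Lemma 3.19]{Pankov-book} supplies --- the paper simply cites it, and with it the boundedness of the restrictions comes for free and the automorphism of ${\mathbb C}$ is already identity or conjugation. Second, your closed-graph argument for boundedness is too vague as written: from $L(x_n)\to y$ you only get $y\in L(\overline{\mathrm{span}}\{x_n,x\})$, not $y\in L({\mathbb C}x)$, and the ``little more work'' you defer is the actual content. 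Both issues disappear if you start, as the paper does, from the cited lemma giving bounded linear or conjugate-linear $A_X:X\to h(X)$ on each interval.
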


By \cite[Theorem 3.17]{Pankov-book}, every bijective transformation of ${\mathcal G}_{\infty}(H)$
preserving the order in both directions is induced by an invertible bounded linear or conjugate-linear operator.
The proof of Proposition \ref{prop-order}  is based of the same arguments.

\begin{proof}[Proof of Proposition \ref{prop-order}]
By \cite[Lemma 3.19]{Pankov-book}, for every $X\in {\mathcal G}_{\aleph_0}(H)$
there is an invertible bounded linear or conjugate-linear operator  $$A_{X}:X\to h(X)$$ such that
$h(Y)=A_{X}(Y)$ for every $Y\in {\mathcal G}_{\aleph_0}(H)$ contained in $X$.
For a $1$-dimensional subspace $Q\subset H$ we take any $X\in {\mathcal G}_{\aleph_0}(H)$ containing $Q$
and set $$h'(Q)=A_X(Q).$$
We need to show $A_X(Q)=A_Y(Q)$ for any other $Y\in {\mathcal G}_{\aleph_0}(H)$ containing  $Q$.

If $X\cap Y$ belongs to ${\mathcal G}_{\aleph_0}(H)$, then there are $X',Y'\in {\mathcal G}_{\aleph_0}(H)$
contained in $X\cap Y$ and such that $X'\cap Y'=Q$.
We have 
$$A_{X}(Q)=A_X(X')\cap A_X(Y')=h(X')\cap h(Y')=A_Y(X')\cap A_Y(Y')=A_Y(Q).$$
If $X\cap Y$ is finite-dimensional, then there is $Z\in {\mathcal G}_{\aleph_0}(H)$ such that
$X\cap Z$ and $Y\cap Z$ are elements of ${\mathcal G}_{\aleph_0}(H)$ containing $X\cap Y$
\cite[Lemma 3.20]{Pankov-book} and we obtain that 
$$A_{X}(Q)=A_{Z}(Q)=A_{Y}(Q).$$

So, $h':{\mathcal G}_{1}(H)\to {\mathcal G}_{1}(H)$ is well-defined. 
It is easy to see that $h'$ is an automorphism of the projective space associated to $H$
(a bijection preserving the family of lines in both directions).
By the Fundamental Theorem of Projective Geometry, 
$h'$ is induced by a semilinear automorphism $A$ of $H$.
The restriction of $A$ to every $X\in {\mathcal G}_{\aleph_0}(H)$
is a scalar multiple of $A_X$.
\end{proof}

\begin{rem}{\rm
If $H$ is non-separable, then it cannot be presented as the orthogonal sum of two elements from 
${\mathcal G}_{\aleph_0}(H)$ and the operator $A$ is not necessarily bounded.
}\end{rem}

\begin{cor}\label{cor-ortho}
Every bijective transformation $h$ of ${\mathcal G}_{\aleph_0}(H)$ preserving the orthogonality  in both directions
is induced by a unitary or anti-unitary operator on $H$.
\end{cor}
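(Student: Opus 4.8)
The plan is to recover the partial order on ${\mathcal G}_{\aleph_0}(H)$ from the orthogonality relation, then feed this into Proposition \ref{prop-order}, and finally use Uhlhorn's version of Wigner's theorem on the lines of $H$ to pin down the scaling and the companion field automorphism. First I would check that for every $X\in {\mathcal G}_{\aleph_0}(H)$ the double-orthogonal set $\{X\}^{\perp\perp}$, formed inside ${\mathcal G}_{\aleph_0}(H)$, is precisely the principal ideal $\{Z\in {\mathcal G}_{\aleph_0}(H):Z\subseteq X\}$. The inclusion $\supseteq$ is clear, since $Z\subseteq X$ gives $X^{\perp}\subseteq Z^{\perp}$, so every $Y$ orthogonal to $X$ is orthogonal to $Z$. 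For the reverse inclusion, if $Z\not\subseteq X$, then $X^{\perp}\not\subseteq Z^{\perp}$ and one picks a non-zero vector $v\in X^{\perp}\setminus Z^{\perp}$; as $H$ is infinite-dimensional the line ${\mathbb C}v$ lies in ${\mathcal G}_{\aleph_0}(H)$, it is orthogonal to $X$ but not to $Z$, and hence $Z\notin\{X\}^{\perp\perp}$. Since $h$ preserves orthogonality in both directions, $h(\{X\}^{\perp})=\{h(X)\}^{\perp}$ and therefore $h(\{X\}^{\perp\perp})=\{h(X)\}^{\perp\perp}$; by the identification just made, $h$ sends the principal ideal below $X$ onto the principal ideal below $h(X)$, and since $h$ is bijective this means that $h$ preserves the order in both directions.

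Now Proposition \ref{prop-order} applies and produces an invertible linear or conjugate-linear operator $A$ on $H$ with $h(X)=A(X)$ for all $X\in {\mathcal G}_{\aleph_0}(H)$. Since $H$ is infinite-dimensional, every line has infinite codimension, so ${\mathcal G}_{1}(H)\subseteq {\mathcal G}_{\aleph_0}(H)$; as $h$ takes lines to lines, its restriction to ${\mathcal G}_{1}(H)$ — identified with the conjugacy class ${\mathcal P}_{1}(H)$ of rank-one projections — is a bijective transformation preserving the orthogonality relation in both directions. Since $\dim H\ge 3$, Uhlhorn's version of Wigner's theorem \cite{Uhlhorn} shows that this restriction is induced by a unitary or anti-unitary operator $U$. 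Then $A$ and $U$ induce the same collineation of the projective space associated with $H$, whence, by the uniqueness part of the Fundamental Theorem of Projective Geometry, $A=cU$ for some non-zero scalar $c$. Therefore $h(X)=A(X)=cU(X)=U(X)$ for every $X\in {\mathcal G}_{\aleph_0}(H)$, which is the assertion.

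The only subtle point I anticipate is the first step, the order-theoretic recovery of inclusions from orthogonality inside ${\mathcal G}_{\aleph_0}(H)$: one has to be sure that there are enough ``small'' subspaces in ${\mathcal G}_{\aleph_0}(H)$ to separate points, and it is precisely here that the infinite-dimensionality of $H$ is used (every line has infinite codimension). Once the order is available, Proposition \ref{prop-order} carries the geometric load and Uhlhorn's theorem removes the remaining ambiguity; note that in the non-separable case the operator $A$ of Proposition \ref{prop-order} need not be bounded a priori, but the conclusion $A=cU$ forces it to be, so orthogonality preservation automatically excludes the unbounded possibility.
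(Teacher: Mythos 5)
Your proposal is correct and follows essentially the same route as the paper: both recover the inclusion order on ${\mathcal G}_{\aleph_0}(H)$ from the orthogonality relation (your identification $\{X\}^{\perp\perp}=[X]$ is equivalent to the paper's observation that $Y\subseteq X$ if and only if $[X^{\perp}]\subseteq [Y^{\perp}]$) and then apply Proposition \ref{prop-order}. The only divergence is the final step, where the paper simply notes that the resulting invertible linear or conjugate-linear operator sends orthogonal vectors to orthogonal vectors and cites \cite[Proposition 4.2]{Pankov-book}, whereas you reach the same conclusion via Uhlhorn's theorem on lines combined with the uniqueness part of the Fundamental Theorem of Projective Geometry; both arguments are valid.
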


\begin{proof}
For any subspace $X\subset H$ we denote by 
$[X]$ the set of all elements of  ${\mathcal G}_{\aleph_0}(H)$ contained in $X$.
If $X\in {\mathcal G}_{\aleph_0}(H)$,
then $[X^{\perp}]$ is formed by all elements of ${\mathcal G}_{\aleph_0}(H)$ orthogonal to $X$. 
Since $h$ is orthogonality preserving in both directions, 
$$h([X^{\perp}])=[h(X)^{\perp}]$$
for every $X\in {\mathcal G}_{\aleph_0}(H)$.
Observe that $Y\in {\mathcal G}_{\aleph_0}(H)$ is contained in $X\in {\mathcal G}_{\aleph_0}(H)$ if and only if 
$[X^{\perp}]\subset [Y^{\perp}]$.
The latter holds if and only if $h([X^{\perp}])=[h(X)^{\perp}]$ is contained in $h([Y^{\perp}])=[h(Y)^{\perp}]$ or,
equivalently, $h(Y)\subset h(X)$.
So, $h$ is order preserving in both directions. 
By Proposition \ref{prop-order}, it is induced by an invertible linear or conjugate-linear operator.
This operator sends orthogonal vectors to orthogonal vectors
which means that it is a scalar multiple of a unitary or anti-unitary operator 
\cite[Proposition 4.2]{Pankov-book}.
\end{proof}

\end{document}